\newcommand{\SELF}[1]{\todo[color=green!50]{#1}} 
\newtheorem{theorem}{Theorem}[section]
\newtheorem{proposition}[theorem]{Proposition}
\newtheorem{corollary}[theorem]{Corollary}
\newtheorem{lemma}[theorem]{Lemma}
\theoremstyle{definition}
\newtheorem*{definition}{Definition} 
\newtheorem{example}[theorem]{Example}
\newcommand\inner[1] 		{\langle #1 \rangle}		
\def\lcontr		{\lrcorner\,}				
\def\glcontr	{\rfloor}					
\def\Proj			{\mathrm{Proj}}
\def\N{\mathds{N}}
\def\R{\mathds{R}}
\def\C{\mathds{C}}
\def\I{\mathcal{I}}
\def\im	 				{\mathrm{i}}
\newcommand{\pperp}{\mathrel{\ooalign{\hss$\perp$\hss\cr%
  \kern0.11ex\raise0.45ex\hbox{\rotatebox{-5}{\scalebox{0.66}{$\slash$}}}}}}
\DeclareMathOperator{\Span}{span}
\DeclareMathOperator{\vol}{vol}
\begin{document}

\title{Grassmann angle formulas and identities}

\author{Andr\'e L. G. Mandolesi 
               \thanks{Instituto de Matemática e Estatística, Universidade Federal da Bahia, Av. Adhemar de Barros s/n, 40170-110, Salvador - BA, Brazil. ORCID 0000-0002-5329-7034. E-mail: \texttt{andre.mandolesi@ufba.br}}}
               
\date{\today
\SELF{v2.1}
}

\maketitle

\begin{abstract}
	Grassmann angles improve upon similar concepts of angle between subspaces that measure volume contraction in orthogonal projections, working for real or complex subspaces, and being more efficient when dimensions are different. Their relations with contractions, inner and exterior products of multivectors are used to obtain formulas for computing these or similar angles in terms of arbitrary bases, and various identities for the angles with certain families of subspaces. These include generalizations of the Pythagorean trigonometric identity $\cos^2\theta+\sin^2\theta=1$ for high dimensional and complex subspaces, which are connected to generalized Pythagorean theorems for volumes, quantum probabilities and Clifford geometric product.

	\vspace{.5em}
	\noindent
	{\bf Keywords:} Grassmann angle, angle between subspaces, Pythagorean identity, Grassmann algebra, exterior algebra.
	
	\vspace{3pt}
	
	\noindent
	{\bf MSC:} 15A75, 51M05
\end{abstract}

\section{Introduction}

Measuring the separation between subspaces is important in many areas, from geometry and linear algebra to statistics \cite{Hotelling1936}, operator perturbation theory \cite{Kato1995}, data mining \cite{Jiao2018}, etc.
In high dimensions, no single number fully describes this separation, and different concepts are used: gap, principal angles, minimal angle, Friedrichs angle, and others.
An often used angle concept \cite{Gluck1967,Gunawan2005,Hitzer2010a,Jiang1996} combines principal angles in a way that shows how volumes contract when orthogonally projected between the subspaces. 
Research on it has been focused on real spaces, but many applications require complex ones.

The relation between angle and volume contraction changes significantly in the complex case. Taking this into account, in \cite{Mandolesi_Grassmann} we defined a Grassmann angle that works well in both cases, with only a few adjustments.
It is based on that same angle, but has a subtly important difference, which makes it easier to use with subspaces of distinct dimensions: it is asymmetric, reflecting the dimensional asymmetry between the subspaces in a way that leads to better properties and more general results.
This angle is intrinsically connected with Grassmann algebra, and has found interesting applications in the  geometry of Grassmannians, Clifford algebra, quantum theory, etc. 

In \cite{Mandolesi_Products} we expressed contractions, inner, exterior and Clifford products of blades (simple multivectors) in terms of Grassmann angles.
In this article we use the products to get formulas for the angles in terms of arbitrary bases, and identities relating the angles with the subspaces of an orthogonal partition, with coordinate subspaces of orthogonal bases, and others. 
Some of them generalize the Pythagorean trigonometric identity $\cos^2\theta+\sin^2\theta=1$ for high dimensional and complex subspaces, being related to real and complex Pythagorean theorems for volumes \cite{Mandolesi_Pythagorean} and quantum probabilities \cite{Mandolesi_Born}, and giving a geometric interpretation for an algebraic property of the Clifford product.
Some of our results correspond, in the real case, to known ones, but our methods provide simpler proofs, while also extending them to the complex case.

\Cref{sc:preliminaries} presents concepts and results which will be needed.
We obtain formulas for computing Grassmann angles in \cref{sc:Formulas}, generalized Pythagorean identities in \cref{sc:Pythagorean trigonometric}, and other useful identities in \cref{sc:other identities}.

\section{Preliminaries}\label{sc:preliminaries} 

Here we review results we will use. See \cite{Mandolesi_Grassmann,Mandolesi_Products} for proofs and more details.

In this article, $X$ is a $n$-dimensional vector space over $\R$ (real case) or $\C$ (complex case), with inner product  $\inner{\cdot,\cdot}$ (Hermitian product in the complex case, with conjugate-linearity in the first argument).
For subspaces $V,W\subset X$, $\Proj_W:X\rightarrow W$ and $\Proj^V_W:V\rightarrow W$ are orthogonal projections.
A \emph{line} is a 1-dimensional subspace. 

\subsection{Grassmann algebra and partial orthogonality}

In the Grassmann algebra $\Lambda X$ \cite{Marcus1975,Yokonuma1992}, a \emph{$p$-blade} is a simple multivector $\nu=v_1\wedge\ldots\wedge v_p \in\Lambda^p X$ of \emph{grade} $p$, where $v_1,\ldots,v_p\in X$. If $\nu\neq 0$, it \emph{represents} the $p$-dimensional subspace $V=\Span(v_1,\ldots,v_p)$, and $\Lambda^p V=\Span(\nu)$.
A scalar $\nu\in\Lambda^0 X$ is a $0$-blade, representing $\{0\}$. 

The inner product of $\nu=v_1\wedge\ldots\wedge v_p$ and $\omega=w_1\wedge\ldots\wedge w_p$,
\begin{equation*}
\inner{\nu,\omega} = \det\!\big(\inner{v_i,w_j}\big) = \begin{vmatrix}
\inner{v_1,w_1}& \cdots & \inner{v_1,w_p} \\ 
\vdots & \ddots & \vdots \\ 
\inner{v_p,w_1} & \cdots & \inner{v_p,w_p}
\end{vmatrix},
\end{equation*}
is extended linearly (sesquilinearly, in the complex case), with blades of distinct grades being  orthogonal, and $\inner{\nu,\omega}=\bar{\nu}\omega$ for  $\nu,\omega\in\Lambda^0 X$.
In the real case, the norm $\|\nu\|=\sqrt{\inner{\nu,\nu}}$ gives the $p$-dimensional volume of the parallelotope spanned by $v_1,\ldots,v_p$. In the complex case, $\|\nu\|^2$ gives the $2p$-dimensional volume of the parallelotope spanned by $v_1,\im v_1,\ldots,v_p, \im v_p$. 

Given a subspace $W\subset X$, let $P=\Proj_W$. 
The orthogonal projection of $\nu=v_1\wedge\ldots\wedge v_p$  on $\Lambda W\subset\Lambda X$ is $P\nu = Pv_1\wedge\ldots\wedge Pv_p$.

Orthogonality in $\Lambda^p X$ corresponds to a weaker orthogonality concept in $X$.

\begin{definition}
For subspaces $V,W\subset X$, $V$ is \emph{partially orthogonal} to $W$ ($V\pperp W$) if there is a nonzero $v\in V$ such that $\inner{v,w}=0$ for all $w\in W$. 
\end{definition}

\begin{proposition}\label{pr:partial orthogonality}
Let $V,W\subset X$ and $U\subset V$ be subspaces, with $V$ represented by a blade $\nu$, and $P=\Proj_W$. Then:
\begin{enumerate}[i)]
\item $V \pperp W$ $\Leftrightarrow$ $\dim P(V)<\dim V$. \label{it:orthogonality}
\item If $V\pperp W$ then $P\nu=0$, otherwise $P\nu$ represents $P(V)$. \label{it:Pnu represents PV}
\item If $V\not\pperp W$ then $U\not\pperp W$. \label{pr:subspace not pperp}
\end{enumerate}
\end{proposition}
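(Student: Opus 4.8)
The plan is to reduce all three parts to a single observation: $V\pperp W$ holds precisely when $V\cap W^\perp\neq\{0\}$. Indeed, a nonzero $v\in V$ with $\inner{v,w}=0$ for all $w\in W$ is exactly a nonzero vector of $V$ lying in $W^\perp$, so the definition of partial orthogonality says nothing more than that this intersection is nontrivial. With $P=\Proj_W$, I would also record the companion identity $\ker(P|_V)=V\cap W^\perp$: a vector $v\in V$ is killed by $P$ iff its $W$-component vanishes, i.e.\ iff $v\perp W$. Both statements hold verbatim in the real and complex cases, since in the Hermitian setting $\inner{v,w}=0$ for all $w\in W$ is still equivalent to $Pv=0$.

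For (i), I would apply the rank--nullity theorem to the linear map $P|_V\colon V\to W$, giving $\dim P(V)=\dim V-\dim\ker(P|_V)=\dim V-\dim(V\cap W^\perp)$. Hence $\dim P(V)<\dim V$ iff $\dim(V\cap W^\perp)>0$, which by the opening observation is iff $V\pperp W$.

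For (ii), write $\nu=v_1\wedge\cdots\wedge v_p$ with $v_1,\ldots,v_p$ linearly independent (as $\nu\neq0$), so that $\dim V=p$ and $P\nu=Pv_1\wedge\cdots\wedge Pv_p$. If $V\pperp W$, then (i) gives $\dim P(V)<p$, so the $Pv_i$ are linearly dependent and their wedge vanishes, whence $P\nu=0$. If $V\not\pperp W$, then (i) gives $\dim P(V)=p$ (a projection cannot raise dimension), so the $Pv_i$ are independent, $P\nu\neq0$, and it represents $\Span(Pv_1,\ldots,Pv_p)=P(V)$. The only external input is the standard criterion that a wedge of vectors is zero iff they are dependent, valid over $\R$ and $\C$ alike.

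Part (iii) is then immediate and needs no blade: from $U\subset V$ I get $U\cap W^\perp\subset V\cap W^\perp$, so if $V\not\pperp W$ (i.e.\ $V\cap W^\perp=\{0\}$) then $U\cap W^\perp=\{0\}$, that is, $U\not\pperp W$. I do not anticipate any genuine obstacle here; the whole proposition rests on identifying partial orthogonality with nontriviality of $V\cap W^\perp$ together with the kernel identity $\ker(P|_V)=V\cap W^\perp$, after which (i)--(iii) follow respectively from rank--nullity, the vanishing criterion for wedge products, and monotonicity of intersection under inclusion.
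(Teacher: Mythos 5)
Your proof is correct. Note that the paper itself gives no proof of this proposition: it appears in the preliminaries, with proofs deferred to the cited references \cite{Mandolesi_Grassmann,Mandolesi_Products}. Your reduction of everything to the two identities $V\pperp W \Leftrightarrow V\cap W^\perp\neq\{0\}$ and $\ker(P|_V)=V\cap W^\perp$, followed by rank--nullity for \emph{(i)}, the wedge-vanishing criterion for \emph{(ii)}, and monotonicity of intersection for \emph{(iii)}, is the natural self-contained argument and works verbatim in both the real and complex cases.
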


\begin{proposition}\label{pr:partial orth Lambda orth}
Let $V,W\subset X$ be nonzero subspaces, and $p=\dim V$. Then $V\pperp W \Leftrightarrow \Lambda^p V \perp \Lambda^p W$.
\end{proposition}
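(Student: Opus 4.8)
The plan is to reduce both sides of the equivalence to the single condition that the projection $P\nu$ of a blade $\nu$ representing $V$ vanishes. Since the bridge between $P\nu=0$ and partial orthogonality is already supplied by \cref{pr:partial orthogonality}, the real work is to relate the orthogonality $\Lambda^p V \perp \Lambda^p W$ to $P\nu = 0$.

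First I would fix a blade $\nu = v_1\wedge\cdots\wedge v_p$ representing $V$ and set $P=\Proj_W$. Because $\dim V = p$, the subspace $\Lambda^p V$ is the line $\Span(\nu)$, so $\Lambda^p V \perp \Lambda^p W$ holds if and only if $\inner{\nu,\omega}=0$ for every $\omega\in\Lambda^p W$.

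The key computation is that $\inner{\nu,\omega}=\inner{P\nu,\omega}$ for all $\omega=w_1\wedge\cdots\wedge w_p\in\Lambda^p W$. This follows from the determinant formula for the inner product of blades together with the self-adjointness of $P$: each entry satisfies $\inner{v_i,w_j}=\inner{Pv_i,w_j}$, since $(I-P)v_i$ is orthogonal to $W\ni w_j$, and taking determinants yields the claim (extending to all of $\Lambda^p W$ by (sesqui)linearity). Now $P\nu = Pv_1\wedge\cdots\wedge Pv_p$ lies in $\Lambda^p W$, so choosing $\omega=P\nu$ shows that $\Lambda^p V \perp \Lambda^p W$ forces $\|P\nu\|^2=\inner{P\nu,P\nu}=0$, i.e.\ $P\nu=0$; conversely, if $P\nu=0$ then $\inner{\nu,\omega}=\inner{P\nu,\omega}=0$ for all $\omega\in\Lambda^p W$. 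Hence $\Lambda^p V \perp \Lambda^p W \Leftrightarrow P\nu=0$.

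Finally I would invoke \cref{pr:partial orthogonality}: parts \ref{it:orthogonality} and \ref{it:Pnu represents PV} give $P\nu=0 \Leftrightarrow \dim P(V)<\dim V \Leftrightarrow V\pperp W$, closing the chain of equivalences. I do not anticipate a serious obstacle; the only points requiring care are that $\Lambda^p V$ is one-dimensional, so a single blade $\nu$ can stand in for the whole space, and that the blade inner product ``sees only the projection,'' which is precisely the content of the key computation above.
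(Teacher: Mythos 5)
Your proof is correct. Note that the paper itself does not prove this proposition --- it appears in the preliminaries with proofs deferred to the cited references --- so there is no in-paper argument to compare against; your argument is self-contained and uses only tools the paper makes available. The chain $\Lambda^p V \perp \Lambda^p W \Leftrightarrow P\nu = 0 \Leftrightarrow V \pperp W$ is sound: the identity $\inner{\nu,\omega}=\inner{P\nu,\omega}$ follows entry-wise from $(I-P)v_i\perp W$ in the Gram determinant, the choice $\omega=P\nu$ (itself a blade in $\Lambda^p W$) gives $\|P\nu\|^2=0$ in the forward direction, and \cref{pr:partial orthogonality}\emph{\ref{it:orthogonality}},\emph{\ref{it:Pnu represents PV}} close the loop. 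The only point worth making explicit is that in the last step you need $V\not\pperp W \Rightarrow P\nu\neq 0$, which holds because a blade representing the $p$-dimensional subspace $P(V)$ with $p\geq 1$ is by definition nonzero; you use this implicitly and it is fine.
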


\subsection{Coordinate decomposition}\label{sc:Coordinate decomposition}

\begin{definition}
For integers $1\leq p\leq q$, let 
\[ \I_p^q= \{ (i_1,\ldots,i_p)\in\N^p : 1\leq i_1 < \ldots<i_p\leq q \}. \]
For any multi-index $I=(i_1,\ldots,i_p)\in\I_p^q$, we write $|I|=i_1+\ldots+i_p$ and, if $p<q$, $\hat{I}=(1,\ldots,\hat{i_1},\ldots,\hat{i_p},\ldots,q)\in\I_{q-p}^q$, where each $\hat{i_k}$ indicates that index has been removed.
Also, let $\I_0^q=\{0\}$, and for $I\in\I_0^q$ let $|I|=0$ and $\hat{I}=(1,\ldots,q)\in\I_q^q$. 
For $I\in\I_q^q$ let $\hat{I}=0\in\I_0^q$.
\end{definition}

\begin{definition}
	Given a basis $\beta=(w_1,\ldots,w_q)$ of $W\subset X$, and $1\leq p\leq q$, the \emph{coordinate $p$-subspaces} for $\beta$ are the $\binom{q}{p}$ subspaces  
	\begin{equation*}
		W_I = \Span(w_{ i_1},\ldots,w_{ i_p}),
	\end{equation*}
	for $I=(i_1,\ldots,i_p)\in\I_p^q$, which are represented by the \emph{coordinate $p$-blades} of $\beta$,
	\begin{equation}\label{eq:coordinate blades}
	\omega_I = w_{ i_1}\wedge\ldots\wedge w_{ i_p}\in \Lambda^p W_I.
	\end{equation}
	For $I\in\I_0^q$ we have the coordinate $0$-subspace $W_I=\{0\}$ and $\omega_I=1\in\Lambda^0 W_I$.
\end{definition}

When $\beta$ is orthonormal, $\{\omega_I\}_{I\in\I_p^q}$ is an orthonormal basis of $\Lambda^p W$.

\begin{definition}
Given a decomposed nonzero blade $\omega= w_1\wedge\ldots\wedge w_q\in\Lambda^q X$, take the basis $\beta=(w_1,\ldots,w_q)$ of its subspace. For any $0\leq p\leq q$ and $I\in \I_p^q$, the \emph{coordinate decomposition} of $\omega$ (w.r.t. $I$ and $\beta$) is
\begin{equation}\label{eq:multiindex decomposition}
\omega = \sigma_I \,\omega_I\wedge\omega_{\hat{I}},
\end{equation}
with $\omega_I$ and $\omega_{\hat{I}}$ as in \eqref{eq:coordinate blades}, and $\sigma_I = (-1)^{|I|+\frac{p(p+1)}{2}}$. 
\SELF{$=\sigma_{\omega_I\wedge\omega_{\hat{I}},\omega}$. Tanto faz ser $(-1)^{|I|\pm \frac{p(p+1)}{2}}$}
\end{definition}

%
%

\subsection{Principal angles and vectors}

\begin{definition}
	The \emph{Euclidean angle} $\theta_{v,w}\in[0,\pi]$ between nonzero vectors $v,w\in X$ is given by
	$\cos\theta_{v,w} = \frac{\operatorname{Re}\inner{v,w}}{\|v\| \|w\|}$. In the complex case, there is also a \emph{Hermitian angle} $\gamma_{v,w}\in[0,\frac \pi 2]$ defined  by $\cos\gamma_{v,w} = \frac{|\langle v,w \rangle|}{\|v\| \|w\|}$. 
\end{definition}

The Hermitian angle is the Euclidean angle between $v$ and $\Span_{\C}(w)$.

A list of principal angles \cite{Galantai2006,Jordan1875} is necessary to fully describe the relative position of high dimensional subspaces.

\begin{definition}
	Let $V,W\subset X$ be nonzero subspaces, $p=\dim V$, $q=\dim W$ and $m=\min\{p,q\}$.
	Orthonormal bases $(e_1,\ldots,e_p)$ of $V$ and $(f_1,\ldots,f_q)$ of $W$ are associated \emph{principal bases}, formed by \emph{principal vectors}, with \emph{principal angles} $0\leq \theta_1\leq\ldots\leq\theta_m\leq\frac \pi 2$, if
	\begin{equation}\label{eq:inner ei fj}
		\inner{e_i,f_j} = \delta_{ij}\cos\theta_i.
	\end{equation}
\end{definition}

A singular value decomposition \cite{Galantai2006,Golub2013} gives such bases: for $P=\Proj^V_W$, the $e_i$'s and $f_i$'s are orthonormal eigenvectors of $P^*P$ and $PP^*$, respectively, and the $\cos\theta_i$'s are square roots of the eigenvalues of $P^*P$, if $p\leq q$, or $PP^*$ otherwise.
The $\theta_i$'s are uniquely defined, but the $e_i$'s and $f_i$'s are not.
$P$ is given in principal bases by a $q\times p$ diagonal matrix formed with the $\cos\theta_i$'s, as
\begin{equation}\label{eq:Pei}
	Pe_i=\begin{cases}
		f_i\cdot\cos\theta_i \ \text{ if } 1\leq i\leq m, \\
		0 \hspace{37pt}\text{ if } i>m.
	\end{cases}
\end{equation}

A geometric interpretation of principal angles is that the unit sphere of $V$ projects to an ellipsoid in $W$ with semi-axes of lengths $\cos\theta_i$, for $1\leq i\leq m$ (in the complex case there are 2 semi-axes for each $i$).
They can also be described recursively with a minimization condition: $e_1$ and $f_1$ form the smallest angle $\theta_1$ between nonzero vectors of $V$ and $W$; in their orthogonal complements we obtain $e_2$, $f_2$ and $\theta_2$ in the same way; and so on.

\subsection{Grassmann angles and blade products}\label{sc:Grassmann angle}

Grassmann angles were introduced in \cite{Mandolesi_Grassmann}, and related to blade products in \cite{Mandolesi_Products}.

\begin{definition}
Let $V,W\subset X$ be subspaces, $\nu$ be a nonzero blade representing $V$, and $P=\Proj_W$.
The \emph{Grassmann angle} $\Theta_{V,W}\in[0,\frac\pi 2]$ is given by
\SELF{$V=0$ : $P\nu=\nu$, $\Theta=0$. \\ 
	  $V\neq 0$, $W=0$ : $\nu\neq 0$, $P\nu=0$, $\Theta=\frac\pi 2$
}
\begin{equation}\label{eq:norm projection blade}
\cos\Theta_{V,W}=\frac{\|P\nu\|}{\|\nu\|}.
\end{equation}
\end{definition}

As blade norms (squared, in the complex case) give volumes, $\Theta_{V,W}$ tells us how volumes in $V$ contract when orthogonally projected on $W$. 
In simple cases, where there is an unambiguous concept of angle between subspaces, $\Theta_{V,W}$ coincides with it, as when $V$ is a line, or $V$ and $W$ are planes in $\R^3$.

The main difference between Grassmann angles and similar ones \cite{Gluck1967,Gunawan2005,Hitzer2010a,Jiang1996} is its asymmetry: $\Theta_{V,W}=\frac\pi 2$ if $\dim V>\dim W$, so, in general, $\Theta_{V,W}\neq \Theta_{W,V}$ when dimensions are different.
It reflects the dimensional asymmetry between the subspaces, leading to better and more general results. 
For example, the contraction and exterior product formulas in \cref{pr:products}, or \cref{pr:formula any base dimension,pr:formula complementary angle bases}, only hold without any restrictions thanks to this asymmetry.

Grassmann angles work well in complex spaces, and this makes them useful for applications in areas like quantum theory. Differences due to volumes being given, in this case, by squared norms, have important implications \cite{Mandolesi_Pythagorean,Mandolesi_Born}.

These angles have many useful properties, some of which are listed below.

\begin{proposition}\label{pr:properties Grassmann}
	Let $V,W\subset X$ be subspaces, with principal angles $\theta_1,\ldots,\theta_m$, where $m=\min\{p,q\}$ for $p=\dim V$ and $q=\dim W$, and let $P=\Proj^V_W$. 
	\begin{enumerate}[i)]
	\item $\Theta_{V,W}=\frac \pi 2 \ \Leftrightarrow\ V \pperp W$. \label{it:Theta pi2}
	\item If $V=\Span(v)$ and $W=\Span(w)$ for nonzero $v,w\in X$ then $\Theta_{V,W} = \min\{\theta_{v,w},\pi-\theta_{v,w}\}$ in the real case, and  $\Theta_{V,W} = \gamma_{v,w}$ in the complex one. \label{it:Theta lines}
	\item If $V$ is a line and $v\in V$ then $\|P v\|=\|v\|\cos\Theta_{V,W}$. \label{it:Pv}
	\item $\vol P(S)=\vol S \cdot\cos\Theta_{V,W}$ ($\cos^2\Theta_{V,W}$ in complex case), where $S\subset V$ is a parallelotope and  $\vol$ is the $p$-dimensional volume ($2p$ in complex case). \label{it:projection factor}
	\item $\cos^2\Theta_{V,W}=\det(\bar{\mathbf{P}}^T \mathbf{P})$, where $\mathbf{P}$ is a matrix for $P$ in orthonormal bases of $V$ and $W$. \label{it:formula orthonormal bases}
	\item If $p> q$ then $\Theta_{V,W}=\frac\pi 2$, otherwise $\cos\Theta_{V,W}=\prod_{i=1}^m \cos\theta_i$. 
	\item If $p=q$ then $\Theta_{V,W}= \Theta_{W,V}$.
	\item $\Theta_{T(V),T(W)} = \Theta_{V,W}$ for any orthogonal (unitary, in the complex case) transformation $T:X\rightarrow X$.\label{it:transformation}
		\item If $V'$ and $W'$ are the orthogonal complements of $V\cap W$ in $V$ and $W$, respectively, then $\Theta_{V,W}=\Theta_{V',W'}$. \label{it:orth complem inter}
	\item $\Theta_{V,W} = \Theta_{W^\perp,V^\perp}$. \label{it:Theta perp perp}
\end{enumerate}
\end{proposition}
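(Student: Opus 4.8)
The plan is to reduce both Grassmann angles to determinants via the formula $\cos^2\Theta_{V,W}=\det(\bar{\mathbf P}^T\mathbf P)$ in \cref{pr:properties Grassmann}, and then recognize that the two determinants are related by Sylvester's identity $\det(I-XY)=\det(I-YX)$. First I would fix an orthonormal basis of $X$ and encode orthonormal bases of $V$, $W$, $V^\perp$, $W^\perp$ as the columns of matrices $A$ ($n\times p$), $B$ ($n\times q$), $A_\perp$ ($n\times(n-p)$), $B_\perp$ ($n\times(n-q)$), chosen so that $[A\ A_\perp]$ and $[B\ B_\perp]$ are unitary. The matrix of $\Proj^V_W$ in these bases is $\mathbf P=B^*A$, so the determinant formula gives $\cos^2\Theta_{V,W}=\det(A^*BB^*A)$; likewise, the matrix of $\Proj^{W^\perp}_{V^\perp}$ is $A_\perp^*B_\perp$, whence $\cos^2\Theta_{W^\perp,V^\perp}=\det(B_\perp^*A_\perp A_\perp^*B_\perp)$.

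Next I would exploit the completeness relations coming from unitarity, namely $BB^*=I_n-B_\perp B_\perp^*$ and $A_\perp A_\perp^*=I_n-AA^*$, together with $A^*A=I_p$ and $B_\perp^*B_\perp=I_{n-q}$. Setting $C:=B_\perp^*A$, these rewrite the first determinant as $\det(A^*A-A^*B_\perp B_\perp^*A)=\det(I_p-C^*C)$ and the second as $\det(B_\perp^*B_\perp-B_\perp^*AA^*B_\perp)=\det(I_{n-q}-CC^*)$. Applying Sylvester's determinant identity with $X=C^*$ and $Y=C$ yields $\det(I_p-C^*C)=\det(I_{n-q}-CC^*)$, so $\cos^2\Theta_{V,W}=\cos^2\Theta_{W^\perp,V^\perp}$; since both angles lie in $[0,\tfrac\pi2]$, where $\cos$ is injective and nonnegative, the stated equality follows.

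The main point to handle with care is the bookkeeping of conjugate transposes, so that the Hermitian (complex) case goes through with no change; Sylvester's identity holds verbatim over $\C$, so no separate argument is needed. A pleasant feature of this route is that it requires no case split on $\dim V$ versus $\dim W$: when $p>q$ the matrix $C^*C$ has rank at most $q<p$, both determinants vanish, and both sides equal $\tfrac\pi2$, in agreement with the asymmetric convention for $\Theta$.

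As an alternative I would mention the principal-angle approach: decompose the pair $(V,W)$ into mutually orthogonal lines in $V\cap W$, $V\cap W^\perp$, $V^\perp\cap W$, $V^\perp\cap W^\perp$, and $2$-planes on which $V$ and $W$ meet at an angle $\theta_i\in(0,\tfrac\pi2)$. On each such $2$-plane the complementary lines $V^\perp$ and $W^\perp$ meet at the same angle $\theta_i$, and one checks that $(W^\perp,V^\perp)$ has the same nonzero principal angles and the same number of right angles as $(V,W)$, differing only in the count of zero angles, which do not affect the product $\prod_i\cos\theta_i$ from \cref{pr:properties Grassmann}. This is geometrically transparent but longer; the determinant argument is shorter and dispatches all dimension cases uniformly.
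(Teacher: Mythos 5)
This proposition appears in the paper's preliminaries without proof; it is quoted from the earlier work on Grassmann angles, so there is no in-paper argument to compare against. The decisive problem with your proposal is coverage: the statement lists ten properties, and you prove only the last one, $\Theta_{V,W}=\Theta_{W^\perp,V^\perp}$. Items \emph{(i)}--\emph{(ix)} are not addressed at all. Moreover, your argument for \emph{(x)} takes item \emph{(v)}, $\cos^2\Theta_{V,W}=\det(\bar{\mathbf{P}}^T\mathbf{P})$, as its starting point --- but \emph{(v)} is itself one of the assertions to be proved, so as a proof of the proposition the argument is circular unless you first derive \emph{(v)} from the definition \eqref{eq:norm projection blade} (which is routine: take $\nu$ to be the wedge of an orthonormal basis of $V$, expand $\|P\nu\|^2$ as a Gram determinant of the projected vectors, and read off $\det(\bar{\mathbf{P}}^T\mathbf{P})$).

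Taken on its own as a proof of \emph{(x)} granting \emph{(v)}, your computation is correct: with $\mathbf{P}=B^*A$, the completeness relations $BB^*=I_n-B_\perp B_\perp^*$ and $A_\perp A_\perp^*=I_n-AA^*$, and the Weinstein--Aronszajn/Sylvester identity $\det(I-C^*C)=\det(I-CC^*)$, you do get $\cos^2\Theta_{V,W}=\cos^2\Theta_{W^\perp,V^\perp}$, and both angles lie in $[0,\frac{\pi}{2}]$, so the angles agree. One small slip in your $p>q$ aside: it is $I_p-C^*C=A^*BB^*A$, not $C^*C$, that has rank at most $q<p$; in any event no case split is needed, since the identity holds in all dimensions and the degenerate cases ($W=X$, $V=\{0\}$, etc.) are handled by the convention that a $0\times 0$ determinant equals $1$. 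If you intend to prove the whole proposition, you still owe arguments for the remaining nine items --- most follow from the definition together with the existence of principal bases, and your sketched two-subspace (CS) decomposition is in fact the natural tool for \emph{(vi)}, \emph{(ix)} and \emph{(x)} simultaneously.
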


The Grassmann angle with an orthogonal complement has extra properties which grant it a special name and notation.

\begin{definition}
	The \emph{complementary Grassmann angle} $\Theta_{V,W}^\perp \in [0,\frac \pi 2]$ of subspaces $V,W\subset X$ is $\Theta_{V,W}^\perp=\Theta_{V,W^\perp}$.
\end{definition}

In general, this is not the usual complement, i.e. $\Theta_{V,W}^\perp\neq\frac \pi 2-\Theta_{V,W}$.

\begin{proposition}\label{pr:complementary simple cases}
	Let $V,W\subset X$ be subspaces.
	\begin{enumerate}[i)]
		\item $\Theta_{V,W}^\perp=0 \ \Leftrightarrow\ $ $V\perp W$.\label{it:Theta perp 0}
		\item $\Theta_{V,W}^\perp=\frac \pi 2 \ \Leftrightarrow\ V\cap W\neq\{0\}$. \label{it:Theta perp pi2}
		\item If $\theta_1,\ldots,\theta_m$ are the principal angles then $\cos\Theta_{V,W}^\perp=\prod_{i=1}^m \sin\theta_i$. \label{it:complementary product sines}
		\item If $V$ is a line then $\Theta_{V,W}^\perp=\frac \pi 2-\Theta_{V,W}$. \label{it:complementary line}
		\item $\Theta_{V,W}^\perp = \Theta_{W,V}^\perp$. \label{it:symmetry complementary}
	\end{enumerate}
\end{proposition}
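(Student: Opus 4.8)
The plan is to prove the product formula (iii) first, since the other four items follow from it or from \cref{pr:properties Grassmann}. For (iii), I would fix associated principal bases $(e_1,\ldots,e_p)$ of $V$ and $(f_1,\ldots,f_q)$ of $W$ with $\inner{e_i,f_j}=\delta_{ij}\cos\theta_i$, and analyze $\Proj^V_{W^\perp}$. Since $\Proj_W e_i=\cos\theta_i f_i$ for $i\le m$ by \eqref{eq:Pei}, the complementary components $e_i-\cos\theta_i f_i$ lie in $W^\perp$ and have norm $\sin\theta_i$; normalizing the nonzero ones to unit vectors $g_i$, a one-line computation gives $\inner{e_i,g_j}=\delta_{ij}\sin\theta_i$. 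Thus in the basis $(e_i)$ of $V$ and a suitable orthonormal basis of $W^\perp$ extending the $g_i$, the matrix of $\Proj^V_{W^\perp}$ is diagonal with entries $\sin\theta_i$. When $p>q$ the extra principal vectors $e_{m+1},\ldots,e_p$ already lie in $W^\perp$ by \eqref{eq:Pei}, contributing further singular values equal to $1$.

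I would then read off these singular values and apply the product formula in \cref{pr:properties Grassmann} to the pair $(V,W^\perp)$. If $\dim V\le\dim W^\perp$, i.e.\ $p+q\le n$, the principal cosines of $(V,W^\perp)$ are exactly $\sin\theta_1,\ldots,\sin\theta_m$ together with the extra $1$'s from the $e_i$ with $i>m$, so $\cos\Theta_{V,W^\perp}=\prod_{i=1}^m\sin\theta_i$. If instead $p+q>n$, then $\dim V>\dim W^\perp$ forces $\Theta_{V,W^\perp}=\frac\pi2$; but $p+q>n$ also gives $\dim(V\cap W)\ge p+q-n>0$, hence $\theta_1=0$ and $\prod_{i=1}^m\sin\theta_i=0$, matching $\cos\Theta_{V,W}^\perp=0$. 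This settles (iii), and the dimensional bookkeeping across these cases is where I expect the main care to be needed.

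With (iii) available, items (i), (ii) and (iv) are short. For (i), the product formula gives $\cos\Theta_{V,W}^\perp=\prod\sin\theta_i=1$ iff every $\theta_i=\frac\pi2$, which is equivalent to $V\perp W$. For (iv), a line has a single principal angle $\theta_1=\Theta_{V,W}$, so $\cos\Theta_{V,W}^\perp=\sin\theta_1=\cos(\frac\pi2-\Theta_{V,W})$ yields the stated complement. Item (ii) is cleanest directly from \cref{it:Theta pi2}: $\Theta_{V,W}^\perp=\Theta_{V,W^\perp}=\frac\pi2$ iff $V\pperp W^\perp$, i.e.\ iff some nonzero $v\in V$ is orthogonal to $W^\perp$; but $v\perp W^\perp$ means $v\in W$, so this holds iff $V\cap W\neq\{0\}$.

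Finally, (v) requires no new computation: applying the reflection identity \cref{it:Theta perp perp} to $(V,W^\perp)$ gives $\Theta_{V,W}^\perp=\Theta_{V,W^\perp}=\Theta_{(W^\perp)^\perp,V^\perp}=\Theta_{W,V^\perp}=\Theta_{W,V}^\perp$. Alternatively it follows from (iii) and the symmetry of the principal angles under interchanging $V$ and $W$.
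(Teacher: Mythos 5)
Your argument is correct, and for the key item \emph{(iii)} it takes a genuinely different route from the one this paper offers. The paper itself defers the proof of this proposition to \cite{Mandolesi_Grassmann}, but explicitly proposes a ``simpler way to prove \emph{(iii)}'' via \eqref{eq:complementary orthonormal bases}: in associated principal bases $\mathbf{P}$ is the $q\times p$ diagonal matrix of the $\cos\theta_i$'s, so $\det(\mathds{1}_{q\times q}-\mathbf{P}\bar{\mathbf{P}}^T)=\prod_{i=1}^m(1-\cos^2\theta_i)$, and the identity drops out with no case analysis. That route, however, rests on \cref{pr:formula complementary angle bases} and hence on the exterior-product formula and Schur's determinant identity. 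You instead build explicit associated principal bases for the pair $(V,W^\perp)$: the vectors $g_i=(e_i-\cos\theta_i f_i)/\sin\theta_i$ are indeed orthonormal, lie in $W^\perp$, satisfy $\inner{e_j,g_i}=\delta_{ij}\sin\theta_i$, and any completion of them to an orthonormal basis of $W^\perp$ is automatically orthogonal to every $e_j$ (since $\Proj_{W^\perp}e_j=\sin\theta_j\,g_j$), so the matrix of $\Proj^V_{W^\perp}$ really is diagonal with entries $\sin\theta_i$ and extra $1$'s when $p>m$; the product-of-cosines formula in \cref{pr:properties Grassmann} then gives the result, and your split into $p\le n-q$ versus $p>n-q$ (where $\theta_1=0$ forces both sides to vanish) correctly handles the dimensional asymmetry. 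Your approach is more elementary and self-contained, at the cost of the bookkeeping you already flag (one should also note that reordering is needed so the new principal angles $\tfrac\pi2-\theta_i$ are increasing, though this does not affect the product). Your derivations of \emph{(i)}, \emph{(ii)}, \emph{(iv)} and \emph{(v)} are all sound; for \emph{(v)} the reflection identity \cref{pr:properties Grassmann}\emph{\ref{it:Theta perp perp}} applied to $(V,W^\perp)$ is exactly the clean way to do it.
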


In \cref{sc:Formulas} we get a simpler way to prove \emph{(iii)} than the one in \cite{Mandolesi_Grassmann}.

\begin{definition}
	The \emph{(left) contraction} of $\nu\in\Lambda^p X$ on $\omega\in\Lambda^q X$ is the unique $\nu\lcontr\omega\in \Lambda^{q-p} X$ such that, for all $\mu\in\Lambda^{q-p} X$,
	\begin{equation}\label{eq:contraction}
		\inner{\mu,\nu \lcontr  \omega} = \inner{\nu\wedge\mu,\omega}.
	\end{equation}
\end{definition}

This contraction coincides with the inner product when $p=q$, and it is asymmetric, with $\nu\lcontr\omega=0$ if $p>q$.
It differs from the one used in Clifford geometric algebra \cite{Dorst2002} by a reversion.

\begin{proposition}
	For $\nu\in\Lambda^p X$ and any blade $\omega\in\Lambda^q X$, with $p\leq q$, 
	\SELF{Inclui $p=0$}
	\SELF{Geometric Algebra, Chisolm, eq. 95, dá uma fórmula semelhante pra $\glcontr$}
	\begin{equation}\label{eq:contraction coordinate decomposition} 
		\nu \lcontr  \omega = \sum_{I\in\I_p^q} \sigma_I \,\inner{\nu,\omega_I}\, \omega_{\hat{I}},
	\end{equation}
	where $\sigma_I$, $\omega_I$ and $\omega_{\hat{I}}$ are as in \eqref{eq:multiindex decomposition} for any decomposition $\omega = w_1\wedge\ldots\wedge w_q$.
\end{proposition}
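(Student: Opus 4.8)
The plan is to exploit the uniqueness built into the defining property \eqref{eq:contraction}. Writing the proposed right-hand side as $\eta = \sum_{I\in\I_p^q}\sigma_I\inner{\nu,\omega_I}\,\omega_{\hat I}$, it suffices to show that $\inner{\mu,\eta}=\inner{\nu\wedge\mu,\omega}$ for every $\mu\in\Lambda^{q-p}X$, for then $\eta$ must equal $\nu\lcontr\omega$. Before checking this I would record that both $\nu\lcontr(w_1\wedge\ldots\wedge w_q)$ and $\eta$ are genuinely multilinear in the vectors $w_1,\ldots,w_q$: the contraction because the exterior product is, and $\eta$ because each summand is a product of $\inner{\nu,\omega_I}$, linear in the $w_{i_k}$ (these sit in the second slot of the Hermitian product, so no conjugation appears), and $\omega_{\hat I}$, linear in the remaining $w_j$. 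Fixing an orthonormal basis of $X$ and expanding every $w_i$ in it reduces the whole identity to the case where $(w_1,\ldots,w_q)$ is a tuple of basis vectors, and settles at once the claimed independence of the decomposition.

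For such basis tuples there are two situations. If the vectors are distinct they form an orthonormal list, i.e.\ $\omega$ is an orthonormal decomposition, which I treat below. If two of them coincide, the contraction side vanishes because $\omega=0$, and I must check that $\eta$ vanishes as well. This is the step I expect to be the main obstacle: when $w_a=w_b$ the terms with both $a,b\in I$ or both in $\hat I$ die immediately, but the ``split'' terms (one index in $I$, the other in $\hat I$) survive individually and must cancel in pairs. Tracking this requires comparing $\sigma_I$ with $\sigma_{I'}$ for $I'=(I\setminus\{a\})\cup\{b\}$ and accounting for the induced reorderings of the blades; the conclusion is that $\eta$ is alternating in $w_1,\ldots,w_q$, exactly as $\nu\lcontr\omega$ is. Once this sign bookkeeping is done, multilinearity forces agreement on every tuple, so it only remains to treat orthonormal decompositions.

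For orthonormal $\beta=(w_1,\ldots,w_q)$ I would extend it to an orthonormal basis $(w_1,\ldots,w_n)$ of $X$, so that the coordinate blades $\{\omega_K\}$ form an orthonormal basis of each $\Lambda^r X$, and then test \eqref{eq:contraction} against each coordinate $(q-p)$-blade $\omega_K$. If $K=\hat J$ for some $J\in\I_p^q$, orthonormality gives $\inner{\omega_{\hat J},\eta}=\sigma_J\inner{\nu,\omega_J}$; on the other side, expanding $\nu$ in the coordinate $p$-blades, only the $\omega_J$ term survives the wedge with $\omega_{\hat J}$ and the pairing with $\omega=\sigma_J\,\omega_J\wedge\omega_{\hat J}$ from \eqref{eq:multiindex decomposition}, yielding $\inner{\nu\wedge\omega_{\hat J},\omega}=\overline{\inner{\omega_J,\nu}}\,\sigma_J=\sigma_J\inner{\nu,\omega_J}$, where the conjugation produced by the Hermitian product in its first slot is precisely what converts $\inner{\omega_J,\nu}$ into $\inner{\nu,\omega_J}$. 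If instead $K$ contains an index $>q$, then $\omega_K\perp\Lambda^{q-p}W$ gives $\inner{\omega_K,\eta}=0$, while the support of any $\omega_L\wedge\omega_K$ cannot equal $\{1,\ldots,q\}$, so $\inner{\nu\wedge\omega_K,\omega}=0$ too. Thus $\inner{\omega_K,\eta}=\inner{\nu\wedge\omega_K,\omega}$ for all $K$, and the uniqueness in \eqref{eq:contraction} yields $\eta=\nu\lcontr\omega$, completing the orthonormal case and hence the proof.
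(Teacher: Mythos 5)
The paper itself does not prove this proposition: it sits in the preliminaries, with the proof deferred to the cited reference \cite{Mandolesi_Products}, so there is no in-paper argument to compare against. Your overall strategy -- characterize $\nu\lcontr\omega$ by the uniqueness in \eqref{eq:contraction}, reduce both sides by multilinearity to tuples of orthonormal basis vectors, and then test against coordinate $(q-p)$-blades -- is sound, and the parts you actually compute are correct, including the conjugation bookkeeping ($\inner{\nu,\omega_I}$ linear in the $w_{i_k}$; $\overline{\inner{\omega_J,\nu}}=\inner{\nu,\omega_J}$). The one place where the proof is not complete as written is exactly the step you flag yourself: when $w_a=w_b$ you must show the ``split'' terms of $\eta$ cancel in pairs, i.e.\ that $\eta$ is alternating in $w_1,\ldots,w_q$. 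You describe what has to be compared ($\sigma_I$ versus $\sigma_{I'}$ for $I'=(I\setminus\{a\})\cup\{b\}$, plus the reordering signs of the two blades) but then simply assert the conclusion. The claim is true, but this is the only genuinely delicate sign computation your route requires, and it is left undone.

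You can bypass both that step and the multilinearity reduction by arguing directly for an arbitrary decomposition. By linearity assume $\nu=v_1\wedge\ldots\wedge v_p$ and $\mu=u_1\wedge\ldots\wedge u_{q-p}$ are blades; then $\inner{\nu\wedge\mu,\omega}$ is the $q\times q$ determinant $\det\big(\inner{x_i,w_j}\big)$ with $(x_1,\ldots,x_q)=(v_1,\ldots,v_p,u_1,\ldots,u_{q-p})$. Expanding it along the first $p$ rows via \cref{pr:Laplace expansion} (applied to the transpose, with $J=(1,\ldots,p)$, so $|J|=\tfrac{p(p+1)}{2}$) gives
\begin{equation*}
\inner{\nu\wedge\mu,\omega}=\sum_{I\in\I_p^q}(-1)^{|I|+\frac{p(p+1)}{2}}\inner{\nu,\omega_I}\,\inner{\mu,\omega_{\hat I}}=\Big\langle \mu,\ \sum_{I\in\I_p^q}\sigma_I\inner{\nu,\omega_I}\,\omega_{\hat I}\Big\rangle ,
\end{equation*}
which is the desired identity in one stroke, with independence of the decomposition coming for free. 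This is the same mechanism the paper deploys in the proof of \cref{pr:formula any base dimension}, so it is the natural route in this context; your approach buys a more hands-on verification at the cost of the unfinished alternating-sign check.
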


\begin{proposition}\label{pr:products}
Let $\nu,\omega\in\Lambda X$ be blades representing $V,W\subset X$, respectively.
\begin{enumerate}[i)]
\item $|\inner{\nu,\omega}| = \|\nu\|\|\omega\|\cos \Theta_{V,W}$, if $\nu$ and $\omega$ have equal grades. \label{it:Theta inner blades}
\item $\|\nu\lcontr\omega\| = \|\nu\|\|\omega\| \cos \Theta_{V,W}$. \label{it:Theta norm contraction}
\item $\|\nu\wedge\omega\|=\|\nu\|\|\omega\| \cos\Theta^\perp_{V,W}$. \label{it:exterior product} 
\end{enumerate}
\end{proposition}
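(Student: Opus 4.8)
The plan is to prove all three identities with a single tool, associated principal bases, letting the diagonal structure of the principal relation do the work. Let $(e_1,\ldots,e_p)$ and $(f_1,\ldots,f_q)$ be principal bases of $V$ and $W$, so $\inner{e_i,f_j}=\delta_{ij}\cos\theta_i$, and write $\nu=c\,e_1\wedge\cdots\wedge e_p$ and $\omega=d\,f_1\wedge\cdots\wedge f_q$, so that $\|\nu\|=|c|$ and $\|\omega\|=|d|$ since the $e_i$ and $f_j$ are orthonormal. Throughout $m=\min\{p,q\}$, and I will invoke $\cos\Theta_{V,W}=\prod_{i=1}^m\cos\theta_i$ (with $\Theta_{V,W}=\frac\pi2$ when $p>q$) from \cref{pr:properties Grassmann}, and $\cos\Theta^\perp_{V,W}=\prod_{i=1}^m\sin\theta_i$ from \cref{pr:complementary simple cases}. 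For \emph{(i)} equal grades force $p=q=m$, so expanding the determinant definition of the blade inner product gives $\inner{\nu,\omega}=\bar c\,d\,\det\big(\inner{e_i,f_j}\big)=\bar c\,d\prod_{i=1}^p\cos\theta_i$; taking absolute values yields $|\inner{\nu,\omega}|=\|\nu\|\,\|\omega\|\prod_{i=1}^p\cos\theta_i=\|\nu\|\,\|\omega\|\cos\Theta_{V,W}$.

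For \emph{(ii)}, if $p>q$ both sides vanish, since $\nu\lcontr\omega=0$ for grades $p>q$ while $\cos\Theta_{V,W}=0$. When $p\le q$ I will apply the coordinate decomposition formula \eqref{eq:contraction coordinate decomposition} with $w_i=f_i$, giving $\nu\lcontr\omega=d\sum_{I\in\I_p^q}\sigma_I\,\inner{\nu,\omega_I}\,\omega_{\hat I}$ with $\omega_I=f_{i_1}\wedge\cdots\wedge f_{i_p}$. The coefficient $\inner{\nu,\omega_I}=\bar c\det\big(\inner{e_a,f_{i_b}}\big)$ is the determinant of a matrix with $(a,b)$ entry $\delta_{a,i_b}\cos\theta_a$, which is nonzero only when $\{i_1,\ldots,i_p\}=\{1,\ldots,p\}$, i.e. for the single index $I=(1,\ldots,p)$, where it equals $\bar c\prod_{a=1}^p\cos\theta_a$. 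Thus exactly one orthonormal term $\omega_{\hat I}=f_{p+1}\wedge\cdots\wedge f_q$ survives; as $|\sigma_I|=1$ and $\|\omega_{\hat I}\|=1$, this gives $\|\nu\lcontr\omega\|=|c|\,|d|\prod_{a=1}^p\cos\theta_a=\|\nu\|\,\|\omega\|\cos\Theta_{V,W}$.

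For \emph{(iii)} I will compute $\|\nu\wedge\omega\|^2=|cd|^2\det G$, where $G$ is the Gram matrix of the combined list $e_1,\ldots,e_p,f_1,\ldots,f_q$. In block form $G=\left(\begin{smallmatrix}I_p & C\\ C^* & I_q\end{smallmatrix}\right)$, with $C$ the $p\times q$ matrix $C_{ij}=\delta_{ij}\cos\theta_i$, so the Schur complement gives $\det G=\det(I_p-CC^*)$. Since $CC^*$ is diagonal with entries $\cos^2\theta_1,\ldots,\cos^2\theta_m$ (and zeros beyond index $m$ when $p>q$), we get $\det G=\prod_{i=1}^m\sin^2\theta_i$, whence $\|\nu\wedge\omega\|=\|\nu\|\,\|\omega\|\prod_{i=1}^m\sin\theta_i=\|\nu\|\,\|\omega\|\cos\Theta^\perp_{V,W}$.

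The determinant evaluations are routine; the one place needing care is \emph{(ii)}, where I must confirm that the contraction formula leaves exactly one surviving multi-index and that the degenerate case $p>q$ matches $\Theta_{V,W}=\frac\pi2$, so that the asymmetry of the contraction and of the Grassmann angle line up. The signs $\sigma_I$ and the orientation signs in the Gram computation are irrelevant here, since all three identities involve only norms or absolute values.
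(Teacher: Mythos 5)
This proposition is stated in the paper's preliminaries without proof (it is quoted from the companion paper cited as [Mandolesi\_Products]), so there is no in-paper argument to compare against; judged on its own, your proof is correct. The principal-basis computation is clean: writing $\nu=c\,e_1\wedge\cdots\wedge e_p$ and $\omega=d\,f_1\wedge\cdots\wedge f_q$ is legitimate since $\Lambda^pV$ and $\Lambda^qW$ are lines, the determinant in \emph{(i)} is diagonal, the single surviving multi-index $I=(1,\dots,p)$ in \emph{(ii)} is correctly identified (each row of $\big(\delta_{a,i_b}\cos\theta_a\big)$ needs a nonzero entry, forcing $\{i_1,\dots,i_p\}=\{1,\dots,p\}$), and the Schur-complement evaluation of the Gram determinant in \emph{(iii)} gives $\prod_i\sin^2\theta_i$ as claimed; the degenerate cases $p>q$ and $V\cap W\neq\{0\}$ are handled consistently. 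One caution about \emph{(iii)}: you close by invoking \cref{pr:complementary simple cases}\emph{\ref{it:complementary product sines}} ($\cos\Theta^\perp_{V,W}=\prod_i\sin\theta_i$), but the proof of that fact which \emph{this} paper advertises (via \eqref{eq:complementary orthonormal bases}) itself rests on \cref{pr:products}\emph{\ref{it:exterior product}}, so within this paper's logical ordering your argument for \emph{(iii)} would be circular; it is rescued only because \cref{pr:complementary simple cases}\emph{\ref{it:complementary product sines}} has an independent proof in the cited reference. A way to avoid leaning on it at all is to note that your Gram computation already shows $\|\nu\wedge\omega\|=\|\nu\|\|\omega\|\prod_i\sin\theta_i$, and then identify $\prod_i\sin\theta_i$ with $\|\Proj_{W^\perp}\nu\|/\|\nu\|$ directly from the relation between the principal angles of $(V,W)$ and $(V,W^\perp)$, rather than citing the proposition. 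You should also state explicitly (even if only to dismiss them) the grade-zero cases, since the definition of principal bases presupposes nonzero subspaces.
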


With a Grassmann angle $\mathbf{\Theta}_{V,W}$ for oriented subspaces (with orientations of $\nu,\omega\in\Lambda^p X$) given by $\cos \mathbf{\Theta}_{V,W} = \frac{\inner{\nu,\omega}}{|\inner{\nu,\omega}|} \cos\Theta_{V,W}$ if $\inner{\nu,\omega}\neq 0$, otherwise $\mathbf{\Theta}_{V,W} = \frac\pi 2$, we also have
\begin{equation}\label{eq:inner oriented}
	\inner{\nu,\omega} = \|\nu\|\|\omega\|\cos \mathbf{\Theta}_{V,W}.
\end{equation}

\section{Formulas for Grassmann angles}\label{sc:Formulas}

Here we give formulas for computing Grassmann and complementary Grassmann angles in terms of arbitrary bases, thus generalizing \cref{pr:properties Grassmann}\emph{\ref{it:formula orthonormal bases}}.
They can be adapted for use with similar angles \cite{Gluck1967,Gunawan2005,Hitzer2010a,Jiang1996}, with some dimensional restrictions ($\dim V\leq \dim W$ in \cref{pr:formula any base dimension}, $\dim V\leq \dim W^\perp$ in \cref{pr:formula complementary angle bases}) as these other angles are symmetric.

The first formula, for subspaces of same dimension, follows from \cref{pr:products}\emph{\ref{it:Theta inner blades}}. A similar result, for the real case, appears in \cite{Gluck1967}.

\begin{theorem}\label{pr:formula Theta equal dim}
Given bases $(v_1,\ldots,v_p)$ and $(w_1,\ldots,w_p)$ of $p$-dimensional subspaces $V,W\subset X$, let $A=\big(\inner{w_i,w_j}\big), B=\big(\inner{w_i,v_j}\big)$ and $D=\big(\inner{v_i,v_j}\big)$. Then
\begin{equation*}
\cos^2 \Theta_{V,W} = \frac{|\det B\,|^2}{\det A \cdot\det D}.
\end{equation*}
\end{theorem}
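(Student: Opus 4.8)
The plan is to apply \cref{pr:products}\emph{\ref{it:Theta inner blades}} to the blades that the given bases naturally produce. Set $\nu = v_1\wedge\ldots\wedge v_p$ and $\omega = w_1\wedge\ldots\wedge w_p$; these are nonzero (the bases are linearly independent), have equal grade $p$, and represent $V$ and $W$, respectively. By \cref{pr:products}\emph{\ref{it:Theta inner blades}}, $|\inner{\nu,\omega}| = \|\nu\|\,\|\omega\|\cos\Theta_{V,W}$, so, squaring and rearranging,
\begin{equation*}
\cos^2\Theta_{V,W} = \frac{|\inner{\nu,\omega}|^2}{\|\nu\|^2\,\|\omega\|^2}.
\end{equation*}
It then remains to identify each factor on the right with the stated Gram determinants.

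For the denominator I would use the determinant formula for the inner product of blades, which gives $\|\nu\|^2 = \inner{\nu,\nu} = \det\!\big(\inner{v_i,v_j}\big) = \det D$ and, likewise, $\|\omega\|^2 = \inner{\omega,\omega} = \det\!\big(\inner{w_i,w_j}\big) = \det A$. Since $V$ and $W$ are $p$-dimensional, both Gram matrices are nonsingular and these determinants are strictly positive, so the quotient is well defined. The same formula yields $\inner{\nu,\omega} = \det\!\big(\inner{v_i,w_j}\big)$; denote this matrix by $C$, so that $C_{ij} = \inner{v_i,w_j}$.

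The one point requiring care, and the main (if minor) obstacle, is reconciling $C$ with the matrix $B = \big(\inner{w_i,v_j}\big)$ that appears in the statement, an issue that is only visible in the complex case. Because the Hermitian product is conjugate-linear in its first argument, $B_{ij} = \inner{w_i,v_j} = \overline{\inner{v_j,w_i}} = \overline{C_{ji}}$, so $B = \overline{C^{\,T}}$ is the conjugate transpose of $C$. Hence $\det B = \overline{\det C}$, and therefore $|\det B| = |\det C| = |\inner{\nu,\omega}|$; in the real case this reduces to $B = C^T$ and the same equality of absolute values of determinants. Substituting $|\inner{\nu,\omega}|^2 = |\det B|^2$, $\|\nu\|^2 = \det D$ and $\|\omega\|^2 = \det A$ into the displayed expression for $\cos^2\Theta_{V,W}$ gives the claimed formula.
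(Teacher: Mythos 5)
Your proposal is correct and follows exactly the route the paper indicates: it states that the formula ``follows from \cref{pr:products}\emph{\ref{it:Theta inner blades}}'' and gives no further details, which you supply correctly by identifying $\|\nu\|^2=\det D$, $\|\omega\|^2=\det A$ and $|\inner{\nu,\omega}|=|\det B|$ via the Gram-determinant formula for inner products of blades. Your remark on the conjugate transpose in the complex case is the right point of care and is handled correctly.
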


\begin{example}\label{ex:formula bases}
In $\C^3$, let  $V=\Span(v_1,v_2)$ and $W=\Span(w_1,w_2)$ with $v_1=(1,-\xi,0), v_2=(0,\xi,-\xi^2)$, $w_1=(1,0,0)$ and $w_2=(0,\xi,0)$,  where $\xi=e^{\im\frac{2\pi}{3}}$. The theorem gives 
$\Theta_{V,W}=\arccos\frac{\sqrt{3}}{3}$. 
Since $v=(\xi,\xi^2,-2)\in V$ and $w=(1,\xi,0)\in W$ are orthogonal to $V\cap W=\Span(v_1)$, \cref{pr:properties Grassmann}\emph{\ref{it:Theta lines}} and \emph{\ref{it:orth complem inter}} give $\Theta_{V,W}=\Theta_{\Span(v),\Span(w)}=\gamma_{v,w}$, and the Hermitian angle formula confirms the result.
\SELF{Usa \cref{pr:properties Grassmann}\ref{it:Theta lines}}
\end{example}

The next formulas require some determinant identities.

\begin{proposition}[Laplace's Expansion \protect{\cite[p.80]{Muir2003}}]\label{pr:Laplace expansion}
	Given a $q\times q$ matrix $M$ and  a multi-index $J\in\I_p^q$, with $1\leq p<q$,  
	\begin{equation*}
		\det M = \sum_{I\in\I_p^q} (-1)^{|I|+|J|} \det M_{I,J} \cdot \det M_{\hat{I},\hat{J}},
	\end{equation*}
	where $M_{I,J}$ is the $p\times p$ submatrix formed by entries with row indices in $I$ and column indices in $J$, and $M_{\hat{I},\hat{J}}$ is the $(q-p)\times(q-p)$ submatrix formed by entries with row indices not in $I$ and column indices not in $J$.
\end{proposition}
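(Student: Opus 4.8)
The plan is to give an exterior-algebra proof, since the coordinate decomposition \eqref{eq:multiindex decomposition} already packages exactly the sign bookkeeping that makes Laplace's expansion awkward to prove by brute force. Let $e_1,\ldots,e_q$ be a basis of a $q$-dimensional space $Y$ (over $\R$ or $\C$), and let $m_1,\ldots,m_q\in Y$ be the columns of $M$, so $m_k=\sum_{i=1}^q M_{ik}\,e_i$. The determinant is recovered as the single top-grade coefficient, $m_1\wedge\ldots\wedge m_q=\det M\cdot(e_1\wedge\ldots\wedge e_q)$. I write $e_I=e_{i_1}\wedge\ldots\wedge e_{i_p}$ for $I=(i_1,\ldots,i_p)\in\I_p^q$, and $m_J=m_{j_1}\wedge\ldots\wedge m_{j_p}$.

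First I would expand the two partial wedge products of columns over multi-indices. Multilinearity and antisymmetry give $m_J=\sum_{I\in\I_p^q}\det M_{I,J}\,e_I$ and $m_{\hat J}=\sum_{I'\in\I_{q-p}^q}\det M_{I',\hat J}\,e_{I'}$, the coefficients being precisely the minors in the statement. Wedging the two and using $e_I\wedge e_{I'}=0$ unless $I'=\hat I$, only the complementary pairs survive, so
\[
  m_J\wedge m_{\hat J}=\sum_{I\in\I_p^q}\det M_{I,J}\,\det M_{\hat I,\hat J}\;(e_I\wedge e_{\hat I}).
\]

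Next I would convert both sides into multiples of $e_1\wedge\ldots\wedge e_q$ using the coordinate decomposition. Applying \eqref{eq:multiindex decomposition} to $e_1\wedge\ldots\wedge e_q$ with respect to $I$ gives $e_I\wedge e_{\hat I}=\sigma_I\,(e_1\wedge\ldots\wedge e_q)$, while reordering the column factors (the same sign as applying \eqref{eq:multiindex decomposition} to $m_1\wedge\ldots\wedge m_q$ with respect to $J$, and valid for arbitrary columns since it is a pure permutation of wedge factors) gives $m_J\wedge m_{\hat J}=\sigma_J\,(m_1\wedge\ldots\wedge m_q)=\sigma_J\det M\,(e_1\wedge\ldots\wedge e_q)$. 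Equating the two expressions for $m_J\wedge m_{\hat J}$ and cancelling the top blade yields $\sigma_J\det M=\sum_I\sigma_I\det M_{I,J}\det M_{\hat I,\hat J}$, hence $\det M=\sum_I\sigma_I\sigma_J\det M_{I,J}\det M_{\hat I,\hat J}$.

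I expect the only real obstacle to be the sign, and it is dispatched here in one line: since $\sigma_I\sigma_J=(-1)^{|I|+|J|}(-1)^{p(p+1)}$ and $p(p+1)$ is even, $\sigma_I\sigma_J=(-1)^{|I|+|J|}$, which gives the stated formula. A purely combinatorial alternative, grouping the permutations in $\det M=\sum_\sigma\operatorname{sgn}\sigma\prod_k M_{\sigma(k),k}$ by the row-set $\{\sigma(j):j\in J\}$ and factoring each group into two smaller determinants, reaches the same identity; but there the interleaving sign must be computed by hand, whereas routing it through \eqref{eq:multiindex decomposition} makes it automatic.
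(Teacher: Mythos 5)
Your proof is correct. Note, though, that the paper does not prove this proposition at all: it is stated as a classical fact with a citation to Muir's treatise, so there is no in-paper argument to compare against. Your exterior-algebra derivation is a valid, self-contained replacement. The column expansions $m_J=\sum_{I}\det M_{I,J}\,e_I$ and $m_{\hat J}=\sum_{I'}\det M_{I',\hat J}\,e_{I'}$ are the standard identification of $p$-fold wedge coefficients with minors, the survival of only complementary pairs under wedging is right, and the sign bookkeeping is handled cleanly: since $\sigma_I=\pm1$ you may freely pass between $e_1\wedge\ldots\wedge e_q=\sigma_I\,e_I\wedge e_{\hat I}$ and $e_I\wedge e_{\hat I}=\sigma_I\,e_1\wedge\ldots\wedge e_q$, and the observation that $\sigma_I$ is a pure permutation sign (hence applies to the possibly dependent columns $m_k$ just as well as to the basis vectors) is exactly the point that needs saying and you say it. The final cancellation $\sigma_I\sigma_J=(-1)^{|I|+|J|}$ because $p(p+1)$ is even is correct. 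What your route buys is coherence with the rest of the paper: it reuses the coordinate decomposition \eqref{eq:multiindex decomposition} and the same $\sigma_I$ that appear in the contraction formula \eqref{eq:contraction coordinate decomposition} and in the proof of \cref{pr:formula any base dimension}, so the Laplace expansion stops being an external import and becomes an instance of the machinery already set up in \cref{sc:Coordinate decomposition}; the cost is only that one must note the decomposition's sign is permutation-theoretic and hence independent of the blade being nonzero, which you do.
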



\begin{proposition}[Schur's determinant identity \cite{Brualdi1983}] \label{pr:Schur}
Let $M=\begin{psmallmatrix}
A & B \\ 
C & D
\end{psmallmatrix}$ be a $(q+p)\times(q+p)$ matrix, partitioned into $q\times q$, $q\times p$, $p\times q$ and $p\times p$ matrices $A$, $B$, $C$ and $D$, respectively. If $A$ is invertible then
\begin{equation}\label{eq:Schur A}
\det M = \det A \cdot\det(D-CA^{-1}B).
\end{equation}
Likewise, if $D$ is invertible then 
\begin{equation}\label{eq:Schur D}
\det M = \det D\cdot\det(A-BD^{-1}C).
\end{equation}
\end{proposition}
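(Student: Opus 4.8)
The plan is to prove both identities by exhibiting explicit block triangular factorizations of $M$ and then using the multiplicativity of the determinant together with the formula for the determinant of a block triangular matrix. No eigenvalue theory or analytic continuation is needed; everything reduces to a short algebraic verification.

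First I would treat the case where $A$ is invertible. The key observation is the block $LU$ factorization
\[
\begin{pmatrix} A & B \\ C & D \end{pmatrix}
= \begin{pmatrix} I_q & 0 \\ CA^{-1} & I_p \end{pmatrix}
\begin{pmatrix} A & B \\ 0 & D - CA^{-1}B \end{pmatrix},
\]
which one checks by multiplying the two factors on the right: the bottom-right block becomes $CA^{-1}B + (D - CA^{-1}B) = D$, and the remaining three blocks are immediate. Taking determinants and using $\det(XY)=\det X\cdot\det Y$, the first factor is block lower triangular with identity blocks on the diagonal, hence has determinant $1$, while the second factor is block upper triangular, hence has determinant $\det A\cdot\det(D-CA^{-1}B)$. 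This gives \eqref{eq:Schur A}.

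For the case where $D$ is invertible I would use the symmetric factorization
\[
\begin{pmatrix} A & B \\ C & D \end{pmatrix}
= \begin{pmatrix} A - BD^{-1}C & B \\ 0 & D \end{pmatrix}
\begin{pmatrix} I_q & 0 \\ D^{-1}C & I_p \end{pmatrix},
\]
verified the same way (now the top-left block becomes $(A - BD^{-1}C) + BD^{-1}C = A$). The second factor has determinant $1$ and the first is block upper triangular, yielding $\det(A - BD^{-1}C)\cdot\det D$, which is \eqref{eq:Schur D}.

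The only step that is not purely formal is the claim that a block triangular matrix has determinant equal to the product of the determinants of its diagonal blocks, so that is where I would be slightly careful. This is standard, however, and follows from the Laplace expansion of \cref{pr:Laplace expansion} applied along the rows of the zero block (or by a routine induction on the block sizes), so no genuine obstacle arises.
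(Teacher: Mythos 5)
Your proof is correct and follows essentially the same route as the paper: both arguments factor $M$ into a product of two block triangular matrices and invoke the fact that the determinant of a block triangular matrix is the product of the determinants of its diagonal blocks (which the paper also justifies via its Laplace expansion, \cref{pr:Laplace expansion}). The only difference is cosmetic --- the paper places the factor $A$ (resp.\ $D$) in the lower (resp.\ right) triangular factor rather than alongside the Schur complement, which merely redistributes the $\det A$ (resp.\ $\det D$) contribution between the two factors.
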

\begin{proof}
Follows by decomposing $M$ into block triangular matrices, as
$M=\begin{psmallmatrix}
A & 0_{q\times p} \\ 
C & \mathds{1}_{p\times p}
\end{psmallmatrix} 
\begin{psmallmatrix}
\mathds{1}_{q\times q} & A^{-1}B \\ 
0_{p\times q} & D-CA^{-1}B
\end{psmallmatrix}$
or 
$M=\begin{psmallmatrix}
	A-BD^{-1}C & BD^{-1} \\ 
	0_{p\times q} & \mathds{1}_{p\times p}
\end{psmallmatrix} 
\begin{psmallmatrix}
	\mathds{1}_{q\times q} & 0_{q\times p} \\ 
	C & D
\end{psmallmatrix}$.
\SELF{det of block triangular matrices = product of det of diagonal blocks (by \cref{pr:Laplace expansion})}
\end{proof}

We now get a formula for distinct dimensions, simpler than one given in \cite{Gunawan2005} for a similar angle (which corrects another formula from \cite{Risteski2001}).

\begin{theorem}\label{pr:formula any base dimension}
Given bases $(v_1,\ldots,v_p)$ of $V$ and $(w_1,\ldots,w_q)$ of $W$, let $A=\big(\inner{w_i,w_j}\big), B=\big(\inner{w_i,v_j}\big)$, and $D=\big(\inner{v_i,v_j}\big)$. Then
\begin{equation*} 
\cos^2 \Theta_{V,W} = \frac{\det(\bar{B}^T \! A^{-1}B)}{\det D}.
\end{equation*}
\end{theorem}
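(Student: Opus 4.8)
The plan is to compute $\cos^2\Theta_{V,W}$ straight from the definition \eqref{eq:norm projection blade}, using the blade inner product $\inner{\nu,\omega}=\det\big(\inner{v_i,w_j}\big)$. Representing $V$ by $\nu=v_1\wedge\cdots\wedge v_p$, we have $\|\nu\|^2=\det D$, and, with $P=\Proj_W$ and $P\nu=Pv_1\wedge\cdots\wedge Pv_p$, also $\|P\nu\|^2=\det\big(\inner{Pv_i,Pv_j}\big)$. So the whole task reduces to expressing the Gram matrix $\big(\inner{Pv_i,Pv_j}\big)$ in terms of $A$, $B$ and $D$, after which $\cos^2\Theta_{V,W}=\|P\nu\|^2/\|\nu\|^2$ gives the result.

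First I would find the coordinates of each projection $Pv_j$ in the basis $(w_1,\ldots,w_q)$ of $W$. Writing $Pv_j=\sum_k c_{kj}w_k$, the orthogonality $v_j-Pv_j\perp W$ yields $\inner{w_i,v_j}=\sum_k\inner{w_i,w_k}c_{kj}$ for every $i$, i.e. $B=AC$ for the coefficient matrix $C=(c_{kj})$. Since $A$ is the Gram matrix of a basis it is positive definite, hence invertible, so $C=A^{-1}B$.

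Next, since $\Proj_W$ is self-adjoint and idempotent, $\inner{Pv_i,Pv_j}=\inner{v_i,Pv_j}=\sum_k\inner{v_i,w_k}c_{kj}$. Here $\inner{v_i,w_k}=\overline{\inner{w_k,v_i}}=(\bar B^T)_{ik}$, so $\inner{Pv_i,Pv_j}=(\bar B^T C)_{ij}=(\bar B^T A^{-1}B)_{ij}$. Taking determinants gives $\|P\nu\|^2=\det(\bar B^T A^{-1}B)$, and dividing by $\det D$ yields the formula. When $p>q$ the matrix $B$ has rank at most $q<p$, so $\bar B^T A^{-1}B$ is singular and the right side vanishes, consistently with $\Theta_{V,W}=\frac\pi2$.

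The computation is short, so the only genuine obstacles are bookkeeping ones: keeping the conjugation consistent with the Hermitian convention (conjugate-linearity in the first slot), and correctly using $\Proj_W^{*}=\Proj_W=\Proj_W^{2}$ to collapse $\inner{Pv_i,Pv_j}$ to $\inner{v_i,Pv_j}$. As an alternative one could assemble the combined Gram matrix $\begin{psmallmatrix}A & B\\ \bar B^T & D\end{psmallmatrix}$ of $(w_1,\ldots,w_q,v_1,\ldots,v_p)$ and apply Schur's identity (\cref{pr:Schur}); but that produces $\det(D-\bar B^T A^{-1}B)$, the Gram determinant of the components of the $v_i$ orthogonal to $W$, which is the natural input for the \emph{complementary}-angle formula rather than for this one, so the direct route above is cleaner here.
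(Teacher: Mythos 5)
Your proof is correct, and it takes a genuinely different route from the paper's. You work straight from the definition \eqref{eq:norm projection blade}: solving the normal equations $B=AC$ for the coordinates of the projections $Pv_j$ in the basis of $W$, and using $P^*=P=P^2$, you identify the Gram matrix $\big(\inner{Pv_i,Pv_j}\big)$ with $\bar B^T\!A^{-1}B$, so that $\|P\nu\|^2=\det(\bar B^T\!A^{-1}B)$ follows from $\inner{\mu,\mu}=\det\big(\inner{u_i,u_j}\big)$ applied to $P\nu=Pv_1\wedge\ldots\wedge Pv_p$; your conjugation bookkeeping is consistent with the paper's convention, and the argument even covers $p>q$ uniformly (the $Pv_j$ are then linearly dependent, so both sides vanish), making your separate rank remark only a consistency check. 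The paper instead computes $\|\nu\lcontr\omega\|^2$ via the coordinate decomposition \eqref{eq:contraction coordinate decomposition}, recognizes the resulting sum as a Laplace expansion (\cref{pr:Laplace expansion}) of the bordered determinant $\det\begin{psmallmatrix} A & B\\ \bar B^T & 0 \end{psmallmatrix}$, and then concludes with \cref{pr:products}\emph{\ref{it:Theta norm contraction}} and Schur's identity \eqref{eq:Schur A}. Your route is the more elementary and self-contained of the two; what the paper's buys is coherence with its overall program of deriving angle formulas from blade products, plus the intermediate identity $\|\nu\lcontr\omega\|^2=(-1)^p\det M$ as a byproduct. Your closing aside is also accurate: the full Gram matrix $\begin{psmallmatrix} A & B\\ \bar B^T & D \end{psmallmatrix}$ together with \eqref{eq:Schur D} is precisely what the paper uses for the complementary angle in \cref{pr:formula complementary angle bases}, not for this theorem.
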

\begin{proof}
If $p>q$ then $\Theta_{V,W} = \frac \pi 2$, and the determinant of $\bar{B}^T \! A^{-1}B$ vanishes as it is a $p\times p$ matrix with rank at most $q$. 

If $p\leq q$, applying \cref{pr:Laplace expansion}, with $J=(q+1,\ldots,q+p)$, to the $(q+p)\times(q+p)$ block matrix $M=\begin{psmallmatrix}
A & B\  \\ 
\bar{B}^T & 0_{p\times p}
\end{psmallmatrix}$, we get
\SELF{Como $J$ dá as últimas colunas, das $q+p$ linhas de $M$ só as $q$ primeiras não dão 0, por isso pode usar $\I_p^q$ ao invés de $\I_p^{q+p}$}
\[ \det M = \sum_{I\in\I_p^q} (-1)^{|I|+pq+\frac{p(p+1)}{2}}\cdot \det B_I \cdot\det N_{\hat{I}}, \]
where $B_I$ is the $p\times p$ submatrix of $M$ formed by lines of $B$ with indices in $I$, and $N_{\hat{I}}=\begin{psmallmatrix}
A_{\hat{I}} \\[1pt] \bar{B}^T
\end{psmallmatrix}$ is its $q\times q$ complementary submatrix, formed by lines of $A$ with indices not in $I$ and all of $\bar{B}^T$.

For $\nu=v_1\wedge\ldots \wedge v_p$ and $\omega=w_1\wedge\ldots\wedge w_q$ we have, by \eqref{eq:contraction} and \eqref{eq:contraction coordinate decomposition},
\begin{equation*}
\|\nu\lcontr\omega\|^2 = \inner{\nu\wedge(\nu\lcontr\omega),\omega} = \sum_{I\in\I_p^q} \sigma_I \,\inner{\omega_I,\nu}\, \inner{\nu\wedge\omega_{\hat{I}},\omega}.
\end{equation*}
Since $\det B_I=\inner{\omega_I,\nu}$, $\det N_{\hat{I}} = \inner{\omega_{\hat{I}}\wedge\nu,\omega} = (-1)^{pq+p}\inner{\nu\wedge\omega_{\hat{I}},\omega}$
\SELF{$(-1)^{-p^2}=(-1)^p$}
and $\sigma_I = (-1)^{|I|+\frac{p(p+1)}{2}}$, we obtain $\|\nu\lcontr\omega\|^2 = (-1)^p \det M$.
\Cref{pr:products}\emph{\ref{it:Theta norm contraction}} then gives $\cos^2 \Theta_{V,W} = \frac{(-1)^p \det M}{\det D\det A}$, and the result follows from \eqref{eq:Schur A}.\SELF{$\det(-BA^{-1}\bar{B}^T) = (-1)^p \det(BA^{-1}\bar{B}^T)$}
\end{proof}

\begin{example}\label{ex:formula distinct dim}
	In $\R^4$, let $v=(1,0,1,0)$, $w_1=(0,1,1,0)$, $w_2=(1,2,2,-1)$, $V=\Span(v)$ and $W=\Span(w_1,w_2)$. 
	Then $A=\begin{psmallmatrix}
		2 & 4 \\ 4 & 10
	\end{psmallmatrix}$,
	$B=\begin{psmallmatrix}
		1 \\ 3
	\end{psmallmatrix}$ and $D=(2)$, and the theorem gives $\Theta_{V,W}= 45^\circ$, as one can verify by projecting $v$ on $W$. 
	Switching the roles of $V$ and $W$, we now have $A=(2)$,
	$B=(1 \ 3)$, $D=\begin{psmallmatrix}
	2 & 4 \\ 4 & 10
	\end{psmallmatrix}$ and $\Theta_{W,V}= 90^\circ$, which is correct since $\dim W>\dim V$.
\end{example}

We now get formulas for the complementary Grassmann angle.

\begin{theorem}\label{pr:formula complementary angle bases}
Given bases $(v_1,\ldots,v_p)$ of $V$ and $(w_1,\ldots,w_q)$ of $W$, let $A=\big(\inner{w_i,w_j}\big), B=\big(\inner{w_i,v_j}\big)$, and $D=\big(\inner{v_i,v_j}\big)$. Then
\begin{equation}\label{eq:complementary bases}
\cos^2 \Theta^\perp_{V,W} = \frac{\det(A-BD^{-1}\bar{B}^T )}{\det A}.
\end{equation}
\end{theorem}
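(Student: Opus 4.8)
The plan is to reduce the complementary angle to an exterior product norm via \cref{pr:products}\emph{\ref{it:exterior product}}, and then recognize that norm as a Gram determinant whose block structure is resolved by Schur's identity.

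Let $\nu = v_1\wedge\ldots\wedge v_p$ and $\omega = w_1\wedge\ldots\wedge w_q$ represent $V$ and $W$. By \cref{pr:products}\emph{\ref{it:exterior product}}, $\|\nu\wedge\omega\| = \|\nu\|\,\|\omega\|\cos\Theta^\perp_{V,W}$, so
\[
\cos^2\Theta^\perp_{V,W} = \frac{\|\nu\wedge\omega\|^2}{\|\nu\|^2\,\|\omega\|^2}.
\]
The denominator is immediate from the inner product of blades: $\|\nu\|^2 = \det D$ and $\|\omega\|^2 = \det A$, both nonzero since the given vectors are bases (so $A$ and $D$ are positive-definite Gram matrices, in particular invertible).

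For the numerator, I would note that $\nu\wedge\omega$ equals, up to a sign, the wedge of the concatenated list $(w_1,\ldots,w_q,v_1,\ldots,v_p)$, so $\|\nu\wedge\omega\|^2$ is the Gram determinant of that list. Writing this Gram matrix in block form, its top-left $q\times q$ block is $A$, its bottom-right $p\times p$ block is $D$, its top-right block is $\big(\inner{w_i,v_j}\big)=B$, and its bottom-left block is $\big(\inner{v_i,w_j}\big)=\bar{B}^T$ by conjugate symmetry of the inner product. Hence
\[
\|\nu\wedge\omega\|^2 = \det\!\begin{pmatrix} A & B \\ \bar{B}^T & D \end{pmatrix}.
\]

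Finally, since $D$ is invertible, I would apply \eqref{eq:Schur D} to obtain $\|\nu\wedge\omega\|^2 = \det D\cdot\det(A - BD^{-1}\bar{B}^T)$; dividing by $\det D\cdot\det A$ cancels the $\det D$ factor and yields exactly \eqref{eq:complementary bases}. There is no deep obstacle here: the proof is a direct chain from \cref{pr:products}\emph{\ref{it:exterior product}} to Schur's identity. The only points demanding care are the Hermitian bookkeeping that forces the bottom-left block to be $\bar{B}^T$ rather than $B^T$ (the conjugate being essential in the complex case), and selecting the Schur expansion \eqref{eq:Schur D} that pivots on the invertible block $D$; the reordering sign in the wedge product is irrelevant once the norm is squared. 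As a consistency check, when $V\cap W\neq\{0\}$ the numerator vanishes, giving $\Theta^\perp_{V,W}=\frac\pi2$ in agreement with \cref{pr:complementary simple cases}\emph{\ref{it:Theta perp pi2}}.
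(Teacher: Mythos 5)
Your proof is correct and follows essentially the same route as the paper: apply \cref{pr:products}\emph{\ref{it:exterior product}} to the wedge of the two blades, identify $\|\omega\wedge\nu\|^2$ with the Gram determinant of the concatenated basis $\begin{psmallmatrix} A & B \\ \bar{B}^T & D \end{psmallmatrix}$, and finish with Schur's identity \eqref{eq:Schur D}. The paper's proof is just a terser statement of exactly these steps, so your write-up simply supplies the details (including the correct conjugate-transpose bookkeeping) that the paper leaves implicit.
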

\begin{proof}
Let $\nu=v_1\wedge\ldots\wedge v_p$ and $\omega=w_1\wedge\ldots\wedge w_q$.
The result is obtained applying \cref{pr:products}\emph{\ref{it:exterior product}} to $\omega\wedge\nu$, 
and using \eqref{eq:Schur D} with 
$M=\begin{psmallmatrix}
A & B \\ 
\bar{B}^T & D
\end{psmallmatrix}$.
\end{proof}


\begin{corollary}
	If $\mathbf{P}$ is a matrix representing $\Proj^V_W$ in orthonormal bases of $V$ and $W$ then
	\begin{equation}\label{eq:complementary orthonormal bases}
	\cos^2\Theta_{V,W}^\perp=\det(\mathds{1}_{q\times q}-\mathbf{P}\bar{\mathbf{P}}^T ).
	\end{equation}
\end{corollary}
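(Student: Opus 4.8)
The plan is to specialize \cref{pr:formula complementary angle bases} to orthonormal bases. First I would choose orthonormal bases $(v_1,\ldots,v_p)$ of $V$ and $(w_1,\ldots,w_q)$ of $W$, so that the Gram matrices degenerate to $D=\big(\inner{v_i,v_j}\big)=\mathds{1}_{p\times p}$ and $A=\big(\inner{w_i,w_j}\big)=\mathds{1}_{q\times q}$. Substituting these into \eqref{eq:complementary bases} immediately collapses that formula to $\cos^2\Theta^\perp_{V,W}=\det(\mathds{1}_{q\times q}-B\bar{B}^T)$, since $\det A=1$ and $D^{-1}=\mathds{1}_{p\times p}$. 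All that then remains is to identify the matrix $B=\big(\inner{w_i,v_j}\big)$ with the matrix $\mathbf{P}$ of $\Proj^V_W$.

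The key step is this identification. Since $(w_1,\ldots,w_q)$ is orthonormal, the $w_i$-coordinate of any vector $u\in W$ equals $\inner{w_i,u}$; applying this to $u=Pv_j$ shows that the $(i,j)$ entry of $\mathbf{P}$ is $\inner{w_i,Pv_j}$, where $P=\Proj^V_W$. Because $P$ is the orthogonal projection onto $W$ and $w_i\in W$, we have $\inner{w_i,\,v_j-Pv_j}=0$, hence $\inner{w_i,Pv_j}=\inner{w_i,v_j}=B_{ij}$. Thus $\mathbf{P}=B$, and the collapsed formula becomes $\cos^2\Theta^\perp_{V,W}=\det(\mathds{1}_{q\times q}-\mathbf{P}\bar{\mathbf{P}}^T)$, which is exactly \eqref{eq:complementary orthonormal bases}.

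The only point requiring genuine care is the Hermitian convention: since $\inner{\cdot,\cdot}$ is conjugate-linear in its first argument, I would double-check that the conjugation in \eqref{eq:complementary bases} lands on the correct factor, so that $\bar{B}^T$ indeed matches $\bar{\mathbf{P}}^T$ and not merely $B^T$ or $\bar{\mathbf{P}}$. Everything else is a direct substitution into an already-proved theorem, so I expect no real obstacle beyond this bookkeeping with the conjugate-linear slot.
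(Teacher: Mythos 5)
Your proposal is correct and is precisely the derivation the paper intends: the corollary is stated without an explicit proof because it is exactly the specialization of \cref{pr:formula complementary angle bases} to orthonormal bases, where $A=\mathds{1}_{q\times q}$, $D=\mathds{1}_{p\times p}$, and $B=\big(\inner{w_i,v_j}\big)$ is identified with $\mathbf{P}$ via $\inner{w_i,Pv_j}=\inner{w_i,v_j}$. Your check of the conjugate-linear convention is also consistent with \cref{pr:properties Grassmann}\emph{\ref{it:formula orthonormal bases}}, so there is nothing to add.
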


This gives an easy way to prove \cref{pr:complementary simple cases}\emph{\ref{it:complementary product sines}}, as in principal bases $\mathbf{P}$ is a diagonal matrix formed with the $\cos\theta_i$'s.

\begin{example}
	In \cref{ex:formula distinct dim}, \eqref{eq:complementary bases} gives $\Theta^\perp_{V,W}=45^\circ$, in agreement with \cref{pr:complementary simple cases}\emph{\ref{it:complementary line}}. The same formula also gives $\Theta^\perp_{W,V}=45^\circ$, as expected by \cref{pr:complementary simple cases}\emph{\ref{it:symmetry complementary}}.
	Direct calculations show the principal angles of $V$ and $W^\perp$ are $0^\circ$ and $45^\circ$, as are those of $W$ and $V^\perp$, confirming the results.
\end{example}

\begin{example}
	In \cref{ex:formula bases}, using \eqref{eq:complementary bases} with the bases $(v_1,v_2)$ and $(w_1,w_2)$, or  \eqref{eq:complementary orthonormal bases} with the orthonormal bases $(\frac{v_1}{\sqrt{2}},\frac{v}{\sqrt{6}})$ and $(\frac{v_1}{\sqrt{2}},\frac{w}{\sqrt{2}})$, we get $\Theta^\perp_{V,W}=90^\circ$, as expected by \cref{pr:complementary simple cases}\emph{\ref{it:Theta perp pi2}}, since $V\cap W\neq\{0\}$.
\end{example}

\section{Generalized Pythagorean identities}\label{sc:Pythagorean trigonometric}

The Pythagorean trigonometric identity $\cos^2\theta+\sin^2\theta=1$ can be written as 
$\cos^2\theta_x+\cos^2\theta_y=1$, with $\theta_x$ and $\theta_y$ being angles a line in $\R^2$ makes with the axes. 
We give generalizations for Grassmann angles which, with \cref{pr:properties Grassmann}\emph{\ref{it:projection factor}}, lead to real and complex Pythagorean theorems for volumes \cite{Mandolesi_Pythagorean}.
Some correspond to known results in real spaces, which are now extended to the complex case, with important implications for quantum theory \cite{Mandolesi_Born}. We also get a geometric interpretation for a property of the Clifford product.

The first identity relates the Grassmann angles of a (real or complex) line with all subspaces of an orthogonal partition  of $X$.

\begin{theorem}
Given an orthogonal partition $X=W_1\oplus\cdots\oplus W_k$ and a line $L\subset X$, 
\begin{equation}\label{eq:pythagorean line}
	\sum_{i=1}^k \cos^2\Theta_{L,W_i} = 1.
\end{equation}
\end{theorem}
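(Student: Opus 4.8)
The plan is to reduce the Grassmann angle with a line to an ordinary orthogonal projection and then invoke the orthogonal decomposition of $X$. First I would fix a nonzero vector $v\in X$ spanning $L$. By \cref{pr:properties Grassmann}\emph{\ref{it:Pv}}, for each $i$ we have $\|\Proj_{W_i}v\|=\|v\|\cos\Theta_{L,W_i}$, and since $v\neq 0$ this gives
\begin{equation*}
	\cos^2\Theta_{L,W_i}=\frac{\|\Proj_{W_i}v\|^2}{\|v\|^2}.
\end{equation*}

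Next I would use that $X=W_1\oplus\cdots\oplus W_k$ is an \emph{orthogonal} partition. This means the components $\Proj_{W_i}v$ are mutually orthogonal and satisfy $v=\sum_{i=1}^k\Proj_{W_i}v$, so the ordinary Pythagorean theorem in $X$ yields $\|v\|^2=\sum_{i=1}^k\|\Proj_{W_i}v\|^2$. Dividing by $\|v\|^2$ and substituting the expression above immediately gives $\sum_{i=1}^k\cos^2\Theta_{L,W_i}=1$.

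There is no serious obstacle here: the only point requiring care is that the identity genuinely depends on the partition being orthogonal, since the norm decomposition $\|v\|^2=\sum_i\|\Proj_{W_i}v\|^2$ would fail for a merely direct (non-orthogonal) sum. The argument is uniform across the real and complex cases because \cref{pr:properties Grassmann}\emph{\ref{it:Pv}} is, with $\|\cdot\|$ understood as the Hermitian norm in the complex setting; in particular the reduction to a line avoids the volume-squared subtleties that distinguish the complex case for higher-dimensional $V$. As a sanity check, taking $k=2$ and $X=\R^2$ with $W_1,W_2$ the coordinate axes recovers exactly $\cos^2\theta_x+\cos^2\theta_y=1$.
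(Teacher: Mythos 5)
Your proof is correct and follows essentially the same route as the paper: both reduce the statement to the Pythagorean decomposition $\|v\|^2=\sum_i\|\Proj_{W_i}v\|^2$ for a nonzero $v\in L$ and then apply \cref{pr:properties Grassmann}\emph{\ref{it:Pv}} to identify each $\|\Proj_{W_i}v\|^2/\|v\|^2$ with $\cos^2\Theta_{L,W_i}$. The only difference is that you spell out the intermediate steps (uniqueness of the orthogonal decomposition and the division by $\|v\|^2$) that the paper leaves implicit.
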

\begin{proof}
Given a nonzero $v\in L$, as $\|v\|^2 = \sum_{i} \left\|\Proj_{W_i} v \right\|^2$ the result follows from \cref{pr:properties Grassmann}\emph{\ref{it:Pv}}.
\end{proof}

\begin{figure}
	\centering
	\subfloat[$\cos^2\theta_x+\cos^2\theta_y+\cos^2\theta_z=1$]{
		\resizebox*{0.38\textwidth}{!}{\includegraphics{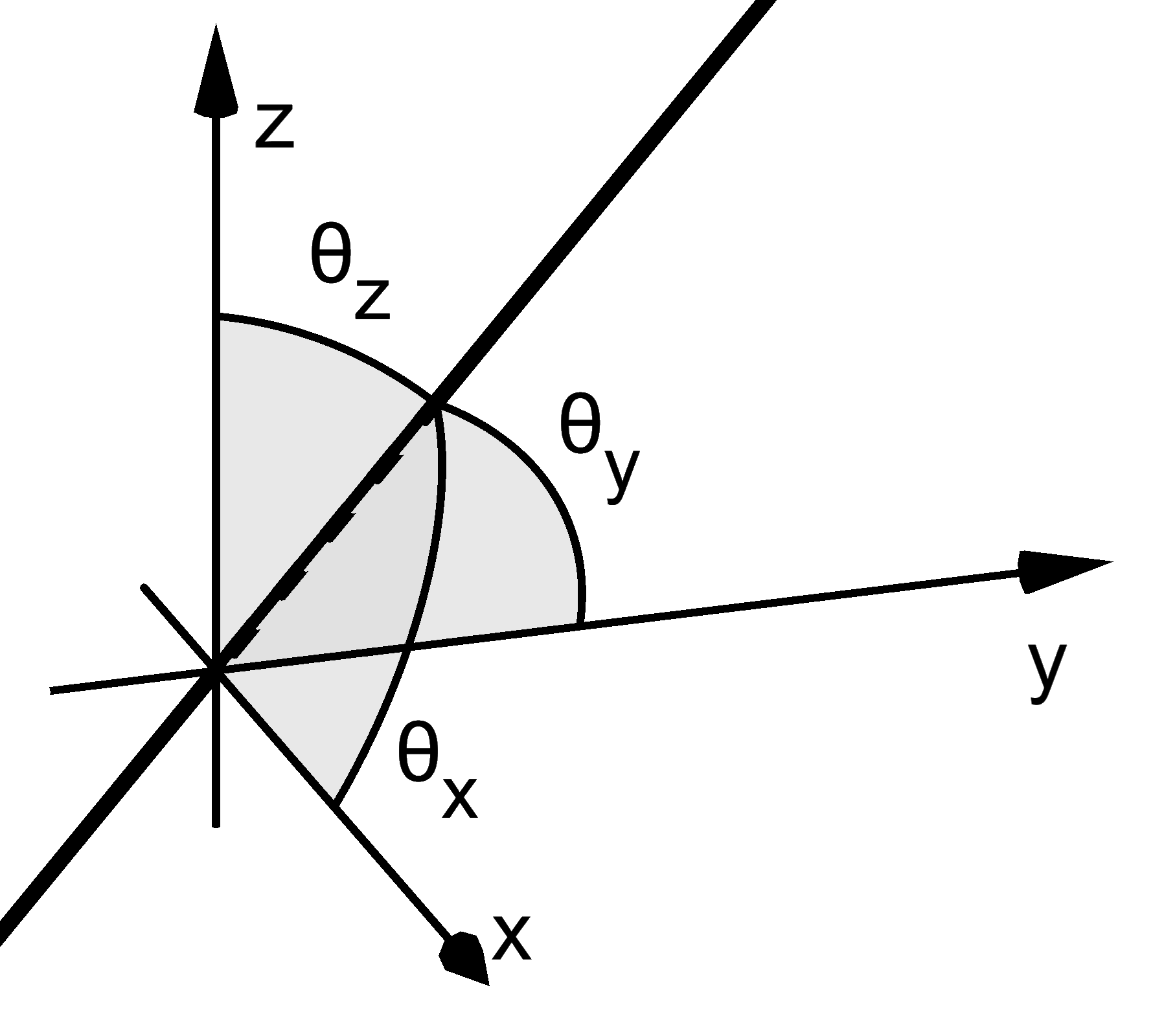}}\label{fig:angulos-eixos-edit}}
	\hfill
	\subfloat[$\cos^2\theta_{xy}+\cos^2\theta_{xz}+\cos^2\theta_{yz}=1$]{
		\resizebox*{0.41\textwidth}{!}{\includegraphics{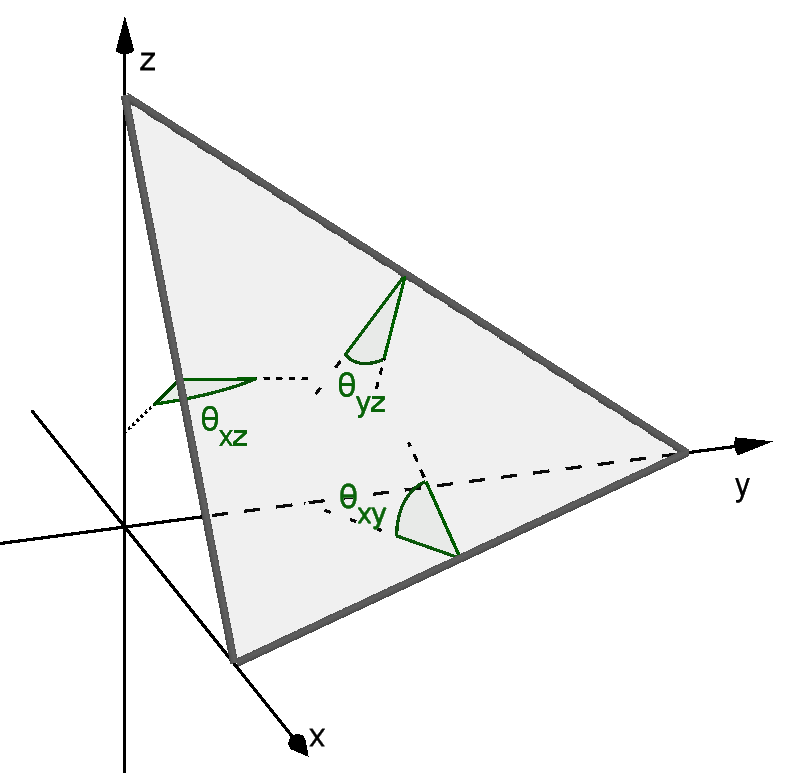}}\label{fig:angulos-faces-edit}}
	\caption{Pythagorean identities for subspaces of equal dimensions.}
	\label{fig:equal dimensions}
\end{figure}

\begin{example}\label{ex:direction cosines}
If $\theta_x, \theta_y$ and $\theta_z$ are the angles between a line in $\R^3$ and the axes (\cref{fig:angulos-eixos-edit}), then $\cos^2\theta_x+\cos^2\theta_y+\cos^2\theta_z=1$. 
\end{example}

This is a known identity for direction cosines and, like other examples we give, is only meant to illustrate the theorem in $\R^3$. The relevance of our result lies mainly in the complex case, where it has important connections with quantum theory.

\begin{example}
	If $X$ is the complex Hilbert space of a quantum system, $L$ is the complex line of a quantum state vector $\psi$, and the $W_i$'s are the eigenspaces of a quantum observable,
	the probability of getting result $i$ when measuring $\psi$ is  $p_i=\|\Proj_{W_i}\psi\|^2/\|\psi\|^2$ \cite{CohenTannoudji2019}. So, by \cref{pr:properties Grassmann}\emph{\ref{it:Pv}}, $p_i=\cos^2 \Theta_{L,W_i}$, and \eqref{eq:pythagorean line} reflects the fact that the total probability is 1.
\end{example}

By \cref{pr:properties Grassmann}\emph{\ref{it:projection factor}}, $\cos^2\Theta_{L,W_i}$ measures area contraction in orthogonal projections of the complex line $L$. This is explored in \cite{Mandolesi_Born} to get a new interpretation for quantum probabilities and derive the Born rule.

The next identities relate Grassmann angles of a (real or complex) subspace with coordinate subspaces of an orthogonal basis of $X$.

\begin{theorem}\label{pr:Grassmann coordinate}
If $V\subset X$ is a $p$-dimensional subspace, and $n=\dim X$,
\[ \sum_{I\in\I_p^n} \cos^2\Theta_{V,W_I} = 1, \]
where the $W_I$'s are the coordinate $p$-subspaces of an orthogonal basis of $X$.
\end{theorem}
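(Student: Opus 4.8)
The plan is to reduce the whole identity to Parseval's equality in $\Lambda^p X$. First I would observe that the coordinate subspaces $W_I$ depend only on the lines spanned by the basis vectors, and that Grassmann angles are unaffected by rescaling those vectors, so without loss of generality the orthogonal basis $(e_1,\ldots,e_n)$ may be taken orthonormal, say $(f_1,\ldots,f_n)$. Then, exactly as recalled just before the statement (with $W=X$ and $q=n$), the coordinate $p$-blades $\{\omega_I\}_{I\in\I_p^n}$ form an orthonormal basis of $\Lambda^p X$.

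Next I would fix a unit blade representing $V$: choosing an orthonormal basis $(v_1,\ldots,v_p)$ of $V$ and setting $\nu=v_1\wedge\cdots\wedge v_p$ gives $\|\nu\|=1$. Since $\nu$ and each $\omega_I$ are blades of equal grade $p$, representing $V$ and $W_I$ respectively, \cref{pr:products}\emph{\ref{it:Theta inner blades}} yields $|\inner{\nu,\omega_I}|=\|\nu\|\,\|\omega_I\|\cos\Theta_{V,W_I}=\cos\Theta_{V,W_I}$, hence $\cos^2\Theta_{V,W_I}=|\inner{\nu,\omega_I}|^2$ for every $I\in\I_p^n$.

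Finally I would sum over $I$ and invoke Parseval's identity for the orthonormal basis $\{\omega_I\}$ of $\Lambda^p X$:
\[ \sum_{I\in\I_p^n}\cos^2\Theta_{V,W_I} = \sum_{I\in\I_p^n}|\inner{\nu,\omega_I}|^2 = \|\nu\|^2 = 1. \]
In the complex case the inner product is only sesquilinear, but this merely conjugates $\inner{\nu,\omega_I}$ and leaves $|\inner{\nu,\omega_I}|$ unchanged, so the argument is verbatim the same.

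I do not expect any serious obstacle here; the result is essentially the term-by-term translation of Parseval's equality for $\nu$ into a Pythagorean statement via \cref{pr:products}\emph{\ref{it:Theta inner blades}}. The only points demanding a little care are the reduction to an orthonormal basis and the invocation of the completeness of $\{\omega_I\}$ in $\Lambda^p X$, both of which are already supplied by the preliminaries.
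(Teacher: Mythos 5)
Your proposal is correct and follows the paper's own proof essentially verbatim: normalize the basis to an orthonormal one, take a unit blade $\nu$ representing $V$, identify $\cos^2\Theta_{V,W_I}$ with $|\inner{\nu,\omega_I}|^2$ via \cref{pr:products}\emph{\ref{it:Theta inner blades}}, and conclude by Parseval's identity in $\Lambda^p X$. The extra care you take in justifying the reduction to an orthonormal basis is a welcome but minor elaboration of the paper's ``without loss of generality.''
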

\begin{proof}
Without loss of generality, assume the basis is orthonormal, so its coordinate $p$-blades $\omega_I$ form an orthonormal basis of $\Lambda^p X$.
For an unit blade $\nu\in\Lambda^p V\subset \Lambda^p X$ we have $\sum_I |\inner{\nu,\omega_I}|^2=1$, and the result follows from \cref{pr:products}\emph{\ref{it:Theta inner blades}}.
\end{proof}

A similar result for the real case, in terms of products of cosines of principal angles, appears in \cite{Miao1992}. 
The theorem extends to affine subspaces, as in the next example, which is a dual of \cref{ex:direction cosines} via \cref{pr:properties Grassmann}\emph{\ref{it:Theta perp perp}}.

\begin{example}
If $\theta_{xy}, \theta_{xz}$ and $\theta_{yz}$ are the angles a plane in $\R^3$ makes with the coordinate planes (\cref{fig:angulos-faces-edit}) then $\cos^2\theta_{xy}+\cos^2\theta_{xz}+\cos^2\theta_{yz}=1$.
\SELF{Another way to express this result is that the sum of the squared cosines of all angles 
	  between the faces of a trirectangular tetrahedron equals 1. By \cref{pr:Grassmann coordinate}, this generalizes for simplices of any dimension {Cho1992}.
}
\end{example}

\begin{example}
Let $\xi,w_1,w_2$ and $V$ be as in \cref{ex:formula bases}, $w_3=(0,0,\xi^2)$ and $W_{ij}=\Span(w_i,w_j)$. 
As the unitary transformation given by $T=\left(\begin{smallmatrix}
	0 & 0 & \xi \\ 
	\xi & 0 & 0 \\ 
	0 & \xi & 0
\end{smallmatrix}\right)$ maps $W_{12}\mapsto W_{23}$, $W_{23}\mapsto W_{13}$, and preserves $V$, \cref{pr:properties Grassmann}\emph{\ref{it:transformation}} gives $\Theta_{V,W_{12}} = \Theta_{V,W_{23}} = \Theta_{V,W_{13}}$.
\SELF{$T:w_1\mapsto w_2\mapsto w_3\mapsto w_1$, $Tv_1=v_2$,\\ $Tv_2= -(v_1+v_2)$}
Since $W_{12}$, $W_{13}$ and $W_{23}$ are the coordinate $2$-subspaces of the orthogonal basis $(w_1,w_2,w_3)$  of $\C^3$, \cref{pr:Grassmann coordinate} gives $\cos\Theta_{V,W_{ij}}=\frac{\sqrt{3}}{3}$, in agreement with that example.
\end{example}

A formula relating the geometric product of Clifford algebra \cite{Hestenes1984clifford} to Grassmann angles, given in \cite{Mandolesi_Products}, implies $\|AB\|^2 = \|A\|^2\|B\|^2 \sum_{J} \cos^2\Theta_{V,Y_J}$, where $A$ and $B$ are $p$-blades representing subspaces $V$ and $W$, and the $Y_J$'s are all coordinate $p$-subspaces of a certain orthogonal basis of $Y=V+W$.
This gives a geometric interpretation for a simple yet important property of this product. 
Those in \cref{pr:products} are submultiplicative for blades because they correspond to projections on single subspaces. The geometric product, on the other hand, involves projections on all $Y_J$'s, and with \cref{pr:Grassmann coordinate} we see this is what allows $\|AB\|= \|A\|\|B\|$.

Extending a result of \cite{Miao1996}, we also have identities for Grassmann angles with coordinate subspaces of a  dimension different from $V$.

\begin{theorem}
Let $V\subset X$ be a $p$-dimensional subspace, $0\leq q\leq n=\dim X$, and the $W_I$'s be the coordinate $q$-subspaces of an orthogonal basis of $X$.
\begin{enumerate}[i)]
\item If $p\leq q$ then $\displaystyle \sum_{I\in\I_q^n} \cos^2\Theta_{V,W_I} =\binom{n-p}{n-q}$. 
\item If $p>q$ then $\displaystyle \sum_{I\in\I_q^n} \cos^2\Theta_{W_I,V} = \binom{p}{q}$. 
\end{enumerate}
\end{theorem}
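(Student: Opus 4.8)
The plan is to reduce both sums of squared cosines to sums of squared norms of contractions, using \cref{pr:products}\emph{\ref{it:Theta norm contraction}}, and then evaluate these by expanding $V$ in the given basis. As the coordinate subspaces depend only on the spans of the basis vectors, we may assume without loss of generality that the basis $(e_1,\ldots,e_n)$ is orthonormal. Then each $W_I$ is represented by a unit coordinate $q$-blade $\omega_I=e_{i_1}\wedge\cdots\wedge e_{i_q}$, and these form an orthonormal basis of $\Lambda^q X$. I would let $\nu$ be a unit $p$-blade representing $V$ and expand $\nu=\sum_{K\in\I_p^n} a_K\,e_K$ in the coordinate $p$-blades $e_K$, so that $\sum_K|a_K|^2=1$. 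The one computational fact needed is the behaviour of the contraction on coordinate blades: $e_K\lcontr e_I=0$ unless $K\subseteq I$ (as index sets), in which case it equals $\pm\,e_{I\setminus K}$, a unit coordinate blade; the signs will be irrelevant since only squared norms enter.

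For part \emph{(i)}, where $p\leq q$, \cref{pr:products}\emph{\ref{it:Theta norm contraction}} gives $\cos\Theta_{V,W_I}=\|\nu\lcontr\omega_I\|$. Contracting the expansion of $\nu$ against $\omega_I=e_I$ yields $\nu\lcontr e_I=\sum_{K\subseteq I}\pm\,a_K\,e_{I\setminus K}$, summed over the $p$-subsets $K$ of $I$. Since $K\mapsto I\setminus K$ is a bijection onto the $(q-p)$-subsets of $I$, the blades $e_{I\setminus K}$ are distinct and orthonormal, so $\|\nu\lcontr e_I\|^2=\sum_{K\subseteq I}|a_K|^2$ with no cross terms. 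Summing over $I$ and interchanging the order of summation gives $\sum_{I\in\I_q^n}\cos^2\Theta_{V,W_I}=\sum_{K\in\I_p^n}|a_K|^2\cdot\#\{I\in\I_q^n:K\subseteq I\}$. For a fixed $K$ the number of $q$-subsets containing it is $\binom{n-p}{q-p}=\binom{n-p}{n-q}$, and since $\sum_K|a_K|^2=1$ the claimed value follows.

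Part \emph{(ii)}, where $p>q$, is dual. Now $\cos\Theta_{W_I,V}=\|\omega_I\lcontr\nu\|$, the contraction being the nonzero direction since $q\leq p$, and $e_I\lcontr\nu=\sum_{K\supseteq I}\pm\,a_K\,e_{K\setminus I}$. The same orthogonality (via $K\mapsto K\setminus I$) gives $\|e_I\lcontr\nu\|^2=\sum_{K\supseteq I}|a_K|^2$, whence $\sum_{I\in\I_q^n}\cos^2\Theta_{W_I,V}=\sum_{K\in\I_p^n}|a_K|^2\cdot\#\{I\in\I_q^n:I\subseteq K\}$. For a fixed $p$-subset $K$ the number of $q$-subsets it contains is $\binom{p}{q}$, so the total is $\binom{p}{q}$.

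The main obstacle, though a mild one, is the orthogonality bookkeeping that lets each $\|\nu\lcontr\omega_I\|^2$ (resp. $\|\omega_I\lcontr\nu\|^2$) split as a plain sum $\sum|a_K|^2$ with no interference; this rests on the bijections $K\mapsto I\setminus K$ and $K\mapsto K\setminus I$ guaranteeing the resulting coordinate blades are distinct, hence orthonormal. Everything else reduces to the elementary double-counting of how many $q$-subsets contain, or are contained in, a fixed $p$-subset, and I would note that $p=q$ recovers \cref{pr:Grassmann coordinate} as the common boundary case.
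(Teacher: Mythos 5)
Your proof is correct, and part \emph{(i)} is essentially the paper's own argument: the paper expands $\nu\lcontr\omega_I$ via \eqref{eq:contraction coordinate decomposition} into the orthonormal sub-blades $(\omega_I)_{\hat J}$, gets $\cos^2\Theta_{V,W_I}=\sum_J|\inner{\nu,(\omega_I)_J}|^2$, and does the same double count of how many $q$-subsets $I$ contain a fixed $p$-subset $K$ (written there as $\binom{n}{q}\binom{q}{p}/\binom{n}{p}=\binom{n-p}{n-q}$); your coefficients $a_K$ are just the inner products $\inner{e_K,\nu}$. Where you genuinely diverge is part \emph{(ii)}: you redo the contraction computation with the arguments swapped, $\cos\Theta_{W_I,V}=\|\omega_I\lcontr\nu\|$, and count the $q$-subsets contained in a fixed $p$-subset, whereas the paper instead invokes \cref{pr:properties Grassmann}\emph{\ref{it:Theta perp perp}} to write $\Theta_{W_I,V}=\Theta_{V^\perp,{W_I}^\perp}$, notes ${W_I}^\perp=W_{\hat I}$, and reduces \emph{(ii)} to an application of \emph{(i)} with $(p,q)$ replaced by $(n-p,n-q)$. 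Your route is more uniform and self-contained (both halves are the same computation, and it makes the $\binom{p}{q}$ count transparent); the paper's buys brevity and exhibits the duality between the two identities explicitly. Two small points of hygiene: the contraction is conjugate-linear in its first slot, so in \emph{(i)} the coefficients are really $\overline{a_K}$ --- harmless since only $|a_K|^2$ survives, but worth stating; and the degenerate cases $q=0$ or $p=0$ should be dispatched separately (as the paper does with ``we can assume $p,q\neq 0$''), since $W_I=\{0\}$ there.
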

\begin{proof}
We can assume $p,q\neq 0$ and that the basis is orthonormal. 
So, for $0\leq r\leq n$ and with $\omega_I$'s as in \eqref{eq:coordinate blades}, $\{\omega_I\}_{I\in\I_r^n}$ and $\{\omega_{\hat{I}}\}_{I\in\I_r^n}$ are orthonormal bases of $\Lambda^r X$ and $\Lambda^{n-r} X$, respectively. 
\begin{itemize}
\item[\emph{(i)}] For an unit blade $\nu\in\Lambda^p V$ and  $I=(i_1,\ldots,i_q)\in\I_q^n$ we have, by \eqref{eq:contraction coordinate decomposition},
\[ \nu \lcontr  \omega_I = \sum_{J\in\I_p^q} \sigma_J \,\inner{\nu,(\omega_I)_J}\, (\omega_I)_{\hat{J}}, \]
where $(\omega_I)_J = w_{ i_{j_1}}\wedge\ldots\wedge w_{ i_{j_p}}$ for $J=(j_1,\ldots,j_p)$, and likewise for $(\omega_I)_{\hat{J}}$.
As the $(\omega_I)_{\hat{J}}$'s are orthonormal, \cref{pr:products}\emph{\ref{it:Theta norm contraction}} gives
\[ \cos^2\Theta_{V,W_I} = \|\nu \lcontr  \omega_I\|^2 = \sum_{J\in\I_p^q} \left|\inner{\nu,(\omega_I)_J}\right|^2, \]
and therefore
\begin{align*}
\sum_{I\in\I_q^n} \cos^2\Theta_{V,W_I} 
&= \sum_{I\in\I_q^n} \sum_{J\in\I_p^q} \left|\inner{\nu,(\omega_I)_J}\right|^2 \\
&= \frac{\binom{n}{q} \binom{q}{p}}{\binom{n}{p}} \sum_{K\in\I_p^n} \left|\inner{\nu,\omega_K}\right|^2 = \binom{n-p}{n-q} \|\nu\|^2,
\end{align*}
where the binomial coefficients account for the number of times each $\omega_K$ appears as a $(\omega_I)_J$ in the double summation.

\item[\emph{(ii)}] For each $I\in\I_q^n$, \cref{pr:properties Grassmann}\emph{\ref{it:Theta perp perp}} gives $\Theta_{W_I,V} = \Theta_{V^\perp,{W_I}^\perp}$. 
As ${W_I}^\perp = W_{\hat{I}}$ for $\hat{I}\in\I_{n-q}^n$, and $\dim V^\perp = n-p < n-q = \dim {W_I}^\perp$, the result follows from the previous case. \qedhere
\end{itemize}
\end{proof}

The following examples are again duals of each other. 

\begin{figure}
	\centering
	\subfloat[$\cos^2\theta_{xy}+\cos^2\theta_{xz}+\cos^2\theta_{yz}=2$]{
		\resizebox*{0.37\textwidth}{!}{\includegraphics{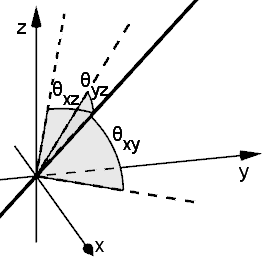}}\label{fig:angulos-linha-planos}}
	\hfill
	\subfloat[$\cos^2\theta_{x}+\cos^2\theta_{y}+\cos^2\theta_{z}=2$]{
		\resizebox*{0.42\textwidth}{!}{\includegraphics{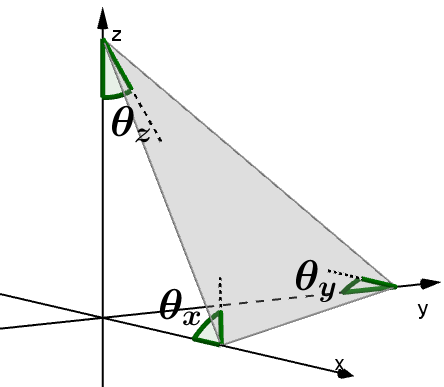}}\label{fig:angulos plano eixos}}
	\caption{Pythagorean identities for subspaces of different dimensions.}
	\label{fig:diferent dimensions}
\end{figure}

\begin{example}
If $\theta_{xy}, \theta_{xz}$ and $\theta_{yz}$ are the angles a line in $\R^3$ makes with the coordinate planes (\cref{fig:angulos-linha-planos}) then   $\cos^2\theta_{xy}+\cos^2\theta_{xz}+\cos^2\theta_{yz}=2$.
\end{example}

\begin{example}
If $\theta_{x}, \theta_{y}$ and $\theta_{z}$ are the angles between the axes and a plane in $\R^3$ (\cref{fig:angulos plano eixos}) then $\cos^2\theta_{x}+\cos^2\theta_{y}+\cos^2\theta_{z}=2$.
\end{example}

\section{Other identities}\label{sc:other identities}

Using the Grassmann angle $\mathbf{\Theta}_{V,W}$ for oriented subspaces, we have:

\begin{theorem}
	Given $p$-dimensional oriented subspaces $V,W\subset X$,
	\begin{equation*}
		\cos\mathbf{\Theta}_{V,W} = \sum_{I\in\I_p^n} \cos\mathbf{\Theta}_{V,X_I} \cos\mathbf{\Theta}_{W,X_I},
	\end{equation*}
	where the $X_I$'s are the coordinate $p$-subspaces of an orthogonal basis of $X$, with orientations given by the corresponding coordinate $p$-blades.
\end{theorem}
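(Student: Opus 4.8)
The plan is to reduce both sides to Hermitian inner products of unit blades in $\Lambda^p X$ and then apply a Parseval-type completeness relation. First I would observe that rescaling any representing blade by a positive real leaves every $\cos\mathbf{\Theta}$ in the statement unchanged, so without loss of generality the orthogonal basis of $X$ may be taken orthonormal; its coordinate $p$-blades $\{\omega_I\}_{I\in\I_p^n}$ then form an orthonormal basis of $\Lambda^p X$, with each $\omega_I$ a unit blade representing $X_I$ and carrying the prescribed orientation. Next I would fix unit blades $\nu,\omega\in\Lambda^p X$ representing $V,W$ with their given orientations, so that \eqref{eq:inner oriented} collapses to $\inner{\nu,\omega}=\cos\mathbf{\Theta}_{V,W}$ and, for each $I$, to $\inner{\nu,\omega_I}=\cos\mathbf{\Theta}_{V,X_I}$.

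The key step is the resolution of the identity in $\Lambda^p X$: expanding $\omega=\sum_{I}\inner{\omega_I,\omega}\,\omega_I$ in the orthonormal basis and pairing it with $\nu$ gives
\begin{equation*}
\cos\mathbf{\Theta}_{V,W}=\inner{\nu,\omega}=\sum_{I\in\I_p^n}\inner{\nu,\omega_I}\,\inner{\omega_I,\omega}.
\end{equation*}
Here the first factor of each term is already $\cos\mathbf{\Theta}_{V,X_I}$ by \eqref{eq:inner oriented}, so the whole identity follows as soon as the second factor $\inner{\omega_I,\omega}$ is identified with $\cos\mathbf{\Theta}_{W,X_I}$. The argument is thus essentially a one-line orthonormal expansion, with everything else being the oriented-angle dictionary supplied by \eqref{eq:inner oriented}.

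The only point I expect to require care is the bookkeeping of the conjugate-linear slot of the Hermitian product, which is where the real and complex cases differ. The natural output of the expansion places $\omega_I$ in the first (conjugated) slot, so by \eqref{eq:inner oriented} one has $\inner{\omega_I,\omega}=\cos\mathbf{\Theta}_{X_I,W}$, giving directly the \emph{chain} form $\cos\mathbf{\Theta}_{V,W}=\sum_I \cos\mathbf{\Theta}_{V,X_I}\cos\mathbf{\Theta}_{X_I,W}$. Since $\inner{\omega_I,\omega}=\overline{\inner{\omega,\omega_I}}=\overline{\cos\mathbf{\Theta}_{W,X_I}}$, this coincides with the stated factor $\cos\mathbf{\Theta}_{W,X_I}$ in the real case immediately, and in the complex case one must either keep track of this conjugation explicitly or read the identity in its chain form. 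No deeper obstacle arises: beyond choosing unit representatives and invoking the oriented inner-product formula, the proof is the completeness relation plus this conjugation check.
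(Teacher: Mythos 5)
Your proof is correct and is essentially the paper's own argument: expand unit blades representing $V$ and $W$ in the orthonormal basis of $\Lambda^p X$ formed by the normalized coordinate $p$-blades and apply \eqref{eq:inner oriented}. The conjugation point you raise is genuine and is passed over by the paper's one-line proof: the expansion naturally produces $\sum_{I}\cos\mathbf{\Theta}_{V,X_I}\cos\mathbf{\Theta}_{X_I,W}$, which agrees with the printed right-hand side verbatim only in the real case (for instance, if $V=W$ is the complex line spanned by $(1,\im)$ in $\C^2$ with the standard basis, the printed sum gives $0$ while $\cos\mathbf{\Theta}_{V,W}=1$), so in the complex case the identity must indeed be read in your chain form or with the second factor conjugated.
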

\begin{proof}
	The result follows by decomposing unit blades $\nu\in\Lambda^p V$ and $\omega\in\Lambda^p W$ (with the orientations of $V$ and $W$) in the orthonormal basis of $\Lambda^p X$ formed with the normalized coordinate $p$-blades, and applying \eqref{eq:inner oriented}.
\end{proof}

This gives an inequality for $\cos\Theta_{V,W} = |\cos\mathbf{\Theta}_{V,W}|$, like one from \cite{Miao1996}.

\begin{corollary}
$\cos\Theta_{V,W} \leq \sum_I \cos\Theta_{V,X_I} \cos\Theta_{W,X_I}$.
\end{corollary}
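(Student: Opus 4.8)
The plan is to derive this directly from the preceding theorem by taking moduli and invoking the triangle inequality. Starting from the oriented identity
\[
\cos\mathbf{\Theta}_{V,W} = \sum_{I\in\I_p^n} \cos\mathbf{\Theta}_{V,X_I} \cos\mathbf{\Theta}_{W,X_I},
\]
I would take the modulus of both sides. On the left, the stated relation $\cos\Theta_{V,W} = |\cos\mathbf{\Theta}_{V,W}|$ turns $|\cos\mathbf{\Theta}_{V,W}|$ into $\cos\Theta_{V,W}$. On the right, applying the triangle inequality and then the multiplicativity of the modulus gives
\[
\Big|\sum_{I} \cos\mathbf{\Theta}_{V,X_I} \cos\mathbf{\Theta}_{W,X_I}\Big|
\leq \sum_{I} \big|\cos\mathbf{\Theta}_{V,X_I}\big|\,\big|\cos\mathbf{\Theta}_{W,X_I}\big|.
\]

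The last step is to apply the same relation $|\cos\mathbf{\Theta}_{V,X_I}| = \cos\Theta_{V,X_I}$ (and likewise for $W$ and $X_I$) to each factor, yielding the claimed bound $\cos\Theta_{V,W} \leq \sum_I \cos\Theta_{V,X_I}\cos\Theta_{W,X_I}$. All the nonnegative quantities $\cos\Theta_{V,X_I}$ and $\cos\Theta_{W,X_I}$ lie in $[0,1]$, so no sign bookkeeping is needed once the moduli are in place.

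The only point that needs genuine care is justifying $\cos\Theta_{V,W}=|\cos\mathbf{\Theta}_{V,W}|$ in the degenerate situation $\inner{\nu,\omega}=0$, where $\mathbf{\Theta}_{V,W}$ was defined to be $\frac\pi2$ by convention rather than through the phase factor $\frac{\inner{\nu,\omega}}{|\inner{\nu,\omega}|}$. In that case $\cos\mathbf{\Theta}_{V,W}=0$, while \cref{pr:products}\emph{\ref{it:Theta inner blades}} forces $\cos\Theta_{V,W}=0$ as well, so the identity $\cos\Theta_{V,W}=|\cos\mathbf{\Theta}_{V,W}|$ persists; in the nondegenerate case it is immediate since the phase factor has unit modulus. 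This is the sole subtlety, and it is more a matter of checking consistency of the definition than a real obstacle: the inequality is essentially the triangle inequality applied to the oriented identity.
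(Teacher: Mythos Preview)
Your argument is correct and is exactly the route the paper indicates: the sentence preceding the corollary already states $\cos\Theta_{V,W}=|\cos\mathbf{\Theta}_{V,W}|$, and the corollary is obtained by taking moduli in the oriented identity and applying the triangle inequality, as you do. Your check of the degenerate case $\inner{\nu,\omega}=0$ via \cref{pr:products}\emph{\ref{it:Theta inner blades}} is the right way to confirm that the relation $\cos\Theta=|\cos\mathbf{\Theta}|$ holds uniformly.
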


\begin{example}
	$\cos\theta = \cos\alpha_x\cos\beta_x + \cos\alpha_y\cos\beta_y + \cos\alpha_z\cos\beta_z$ for the angle $\theta\in[0,\pi]$ between 2 oriented lines in $\R^3$ forming angles $\alpha_x,\alpha_y,\alpha_z$ and $\beta_x,\beta_y,\beta_z$ (all in $[0,\pi]$) with the positive axes.
\end{example}

The last identities will require some preparation. 

\begin{definition}
	A coordinate subspace of a principal basis $\beta$ of $V$ w.r.t. $W$ is a \emph{principal subspace}\footnote{Some authors use `principal subspace' for $\Span(e_i,f_i)$, where $e_i\in V$ and $f_i\in W$ are principal vectors corresponding to the same principal angle $\theta_i$.} (of $V$ w.r.t. $W$, for $\beta$). 
	Two or more subspaces of $V$ are \emph{coprincipal} (w.r.t. $W$) if they are principal for the same $\beta$.
\end{definition}

Note that  a subspace being principal depends on both $V$ and $W$, even if they are left implicit. 
Also, $\{0\}$ is always principal.

\begin{lemma}\label{pr:orthogonal subspace principal}
Let $V,W\subset X$ be nonzero subspaces and $U\subset V$ be any subspace. If $U\perp W$ then $U$ is principal w.r.t. $W$.
\SELF{Usa p/\cref{pr:principal orth proj}. Se $U\subset V\cap W$ também vale} 
\end{lemma}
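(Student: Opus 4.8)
The plan is to exhibit $U$ as a coordinate subspace of a principal basis of $V$ w.r.t.\ $W$ that is constructed specifically adapted to $U$. The guiding idea is that the freedom in choosing principal vectors lives entirely inside the eigenspaces of $P^*P$, and by hypothesis $U$ sits inside a single one of them, namely the kernel.

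First I would reinterpret $U\perp W$ through $P=\Proj^V_W$. For any $u\in U\subset V$ we have $Pu=\Proj_W u=0$, since $u\perp W$, so $U\subset\ker P$. Because $\|Pu\|^2=\inner{u,P^*Pu}$, one has $\ker P=\ker(P^*P)$, so $U$ lies in the eigenspace $E_0:=\ker(P^*P)$ of the Hermitian operator $P^*P$ for the eigenvalue $0$ — equivalently, the eigenvalue associated with the principal angle $\tfrac\pi2$. (If $U=\{0\}$ there is nothing to prove, since $\{0\}$ is always principal, so assume $U\neq\{0\}$.)

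Next I would build the basis. As $P^*P$ is Hermitian, $V$ splits orthogonally into its eigenspaces, with $E_0$ among them. I would take an orthonormal basis of $U$, extend it to an orthonormal basis of $E_0$, and adjoin orthonormal eigenbases of the remaining eigenspaces; ordering the resulting vectors $e_1,\ldots,e_p$ by decreasing eigenvalue gives an orthonormal eigenbasis of $P^*P$ whose principal angles $\theta_i$ increase, and in which the chosen basis of $U$ occupies a block inside $E_0$. It then remains to check that this orthonormal eigenbasis of $P^*P$ really is the $V$-part of a principal basis, as recalled after \eqref{eq:inner ei fj}: setting $f_i=Pe_i/\cos\theta_i$ whenever $\cos\theta_i\neq 0$ yields orthonormal vectors of $W$, since $\inner{Pe_i,Pe_j}=\inner{e_i,P^*Pe_j}=\cos^2\theta_j\,\delta_{ij}$, and completing them to an orthonormal basis $(f_1,\ldots,f_q)$ of $W$ gives $\inner{e_i,f_j}=\delta_{ij}\cos\theta_i$ for all $i,j$ — the cases $\cos\theta_i=0$ and the padding $f_j$'s both contribute $0$, because $Pe_i=0$ or $f_j\perp\operatorname{range}P$. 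Hence $\beta=(e_1,\ldots,e_p)$ is a principal basis and $U$, being the span of a subset of $\beta$, is a principal subspace.

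The step requiring the most care is this last verification: that an arbitrarily chosen orthonormal eigenbasis of $P^*P$ — in particular one engineered to contain a basis of $U$ — can always be completed to a genuine principal basis in the sense of the definition, respecting the angle-ordering convention and, when $\dim V>\dim W$, the extra vectors of $E_0$ (with index $i>\dim W$) that have no matching $f_i$ yet still satisfy $\inner{e_i,f_j}=0$. Once this is established, the conclusion is immediate, since $U\subset E_0$ lets us place an entire basis of $U$ within a single eigenspace.
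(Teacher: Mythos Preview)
Your argument is correct. You identify $U\subset\ker(P^*P)$, build an orthonormal eigenbasis of $P^*P$ that begins with a basis of $U$ inside this kernel eigenspace, and then verify by hand that any such eigenbasis can be paired with suitable $f_j$'s in $W$ to satisfy \eqref{eq:inner ei fj}. The checks you flag as delicate (padding when $\dim V>\dim W$, the $\cos\theta_i=0$ block, orthogonality of the completed $f_j$'s to $\operatorname{range}P$) all go through as you describe.

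The paper's proof reaches the same conclusion by a shorter, more ``black-box'' route: rather than reconstructing the SVD of $P$ on all of $V$, it simply invokes an existing principal basis of the smaller space $U^\perp\cap V$ w.r.t.\ $W$ (which already comes with its companion basis $(f_1,\ldots,f_q)$ of $W$) and observes, directly from \eqref{eq:inner ei fj}, that appending any orthonormal basis of $U$ preserves the defining condition, since $U\perp W$ forces every new $\langle u_k,f_j\rangle$ to vanish. Your approach makes the underlying mechanism---freedom of choice within the eigenspace $E_0$---completely explicit, which is illuminating; the paper's approach trades that transparency for brevity by recursing to a principal basis of a subspace and avoiding any need to rebuild the $W$-side.
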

\begin{proof}
By \eqref{eq:inner ei fj}, the union of an orthonormal basis of $U$ and a principal basis of $U^\perp\cap V$ w.r.t. $W$ gives a principal basis of $V$ w.r.t. $W$. 
\end{proof}

\begin{lemma}\label{pr:principal subspaces}
Let $V,W\subset X$ be nonzero subspaces, with associated principal bases $\beta_V$ and $\beta_W$, $U\subset V$ be any subspace, and $P=\Proj_W$. Then:
\begin{enumerate}[i)]
\item $U$ is principal for $\beta_V$ $\Leftrightarrow$ $U^\perp \cap V$ is principal for $\beta_V$.\label{it:complement principal}
\SELF{Usa para \cref{pr:principal orth proj}}
\item $U$ is principal for $\beta_V$ $\Rightarrow$ $P(U)$ is principal for $\beta_W$. The converse holds if $V\not\pperp W$.\label{it:P(U) principal}
\SELF{\cref{pr:principal partition} usa ida}
\end{enumerate}
\end{lemma}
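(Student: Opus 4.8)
The plan is to work concretely with the associated principal bases $\beta_V=(e_1,\ldots,e_p)$ and $\beta_W=(f_1,\ldots,f_q)$, using the defining relation \eqref{eq:inner ei fj} and the projection formula \eqref{eq:Pei}. Since $U\subset V$ and $P=\Proj_W$ restricts on $V$ to $\Proj^V_W$, I have $Pe_i=f_i\cos\theta_i$ for $i\leq m=\min\{p,q\}$ and $Pe_i=0$ for $i>m$. Being \emph{principal for $\beta_V$} means being a coordinate subspace of that basis, i.e. $U=\Span(e_i:i\in S)$ for some $S\subseteq\{1,\ldots,p\}$, so the whole argument reduces to bookkeeping with index sets.

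For \emph{(i)}, I would use orthonormality of $\beta_V$: if $U=\Span(e_i:i\in S)$, then the orthogonal complement of $U$ inside $V$ is exactly $U^\perp\cap V=\Span(e_i:i\in\{1,\ldots,p\}\setminus S)$, again a coordinate subspace of $\beta_V$ and hence principal. The reverse implication then follows for free, since $S\mapsto\{1,\ldots,p\}\setminus S$ is an involution and $(U^\perp\cap V)^\perp\cap V=U$.

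For the forward direction of \emph{(ii)}, I would compute $P(U)=\Span(Pe_i:i\in S)$ directly from \eqref{eq:Pei}: the nonzero images are the $f_i$ with $i\in S$, $i\leq m$ and $\cos\theta_i\neq 0$, so $P(U)$ is the coordinate subspace of $\beta_W$ spanned by precisely these $f_i$, hence principal for $\beta_W$. This needs no hypothesis relating $V$ and $W$.

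The converse of \emph{(ii)} is where the hypothesis $V\not\pperp W$ does the real work, and I expect it to be the main obstacle. By \cref{pr:partial orthogonality}\emph{\ref{it:orthogonality}}, $V\not\pperp W$ gives $\dim P(V)=\dim V=p$, so $P|_V$ is injective; together with \eqref{eq:Pei} this forces $p\leq q$ (hence $m=p$) and $\cos\theta_i>0$ for every $i$, so $P|_V$ is a bijection onto $P(V)=\Span(f_1,\ldots,f_p)$ with $P|_V^{-1}(f_j)=e_j/\cos\theta_j$. Given that $P(U)$ is principal for $\beta_W$, say $P(U)=\Span(f_j:j\in T)$, I would first note that $P(U)\subseteq P(V)=\Span(f_1,\ldots,f_p)$ together with orthonormality of $\beta_W$ forces $T\subseteq\{1,\ldots,p\}$, and then apply the inverse bijection to obtain $U=P|_V^{-1}(P(U))=\Span(e_j:j\in T)$, a coordinate subspace of $\beta_V$. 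The delicate point is exactly the injectivity: it is what guarantees $P|_V^{-1}(P(U))=U$ and that the inverse sends each selected $f_j$ back to a multiple of $e_j$. Without $V\not\pperp W$ some $e_i$ could collapse to $0$ (if $\cos\theta_i=0$ or $i>m$), and distinct coordinate directions of $V$ could become indistinguishable after projecting, so $P(U)$ could be a coordinate subspace of $\beta_W$ while $U$ itself fails to be one — which is why the converse requires the hypothesis.
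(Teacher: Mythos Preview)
Your argument is correct and follows essentially the same route as the paper: part \emph{(i)} is exactly the observation that the complement within $V$ of a coordinate subspace of the orthonormal basis $\beta_V$ is spanned by the remaining basis vectors, and part \emph{(ii)} (both directions) is precisely the combination of \eqref{eq:Pei} with \cref{pr:partial orthogonality} that the paper invokes. Your write-up simply unpacks in detail what the paper compresses into two lines; in particular, the paper's appeal to \cref{pr:partial orthogonality} for the converse amounts exactly to your injectivity argument for $P|_V$.
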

\begin{proof}
	\emph{(i)} If $U$ is spanned by some vectors of the orthogonal basis $\beta_V$, $U^\perp \cap V$ is spanned by the others.
	\emph{(ii)} Follows from \eqref{eq:Pei}, and the converse from \cref{pr:partial orthogonality}
\SELF{\emph{\ref{it:orthogonality}} and \emph{\ref{pr:subspace not pperp}}}
	as well. 
\end{proof}

\begin{proposition}\label{pr:principal orth proj}
Let $V,W\subset X$ be nonzero subspaces, $U\subset V$ be any subspace and $P=\Proj_W$. Then $U$ is principal $\Leftrightarrow$ $P(U)\perp P(U^\perp \cap V)$.\SELF{Usa p/\cref{pr:principal partition}} 
\end{proposition}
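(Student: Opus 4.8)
The plan is to reduce the statement to a spectral property of the self-adjoint operator $A=\Proj_V\circ\Proj_W|_V:V\rightarrow V$, which is the operator $(\Proj^V_W)^*\Proj^V_W$ whose orthonormal eigenvectors are, as recalled in the singular value decomposition discussion of \cref{sc:preliminaries}, exactly the principal vectors of $V$ w.r.t.\ $W$, with eigenvalues $\cos^2\theta_i$. First I would record the identity $\inner{Pu,Pu'}=\inner{u,Au'}$ for all $u,u'\in V$, which follows from $P=\Proj_W$ being self-adjoint and idempotent together with the fact that $u\in V$ lets one insert $\Proj_V$. Writing $U'=U^\perp\cap V$, this turns the condition $P(U)\perp P(U')$ into $\inner{u,Au'}=0$ for all $u\in U$, $u'\in U'$, i.e.\ $Au'\in V$ is orthogonal to $U$, which means $A(U')\subseteq U'$. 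Hence $P(U)\perp P(U^\perp\cap V)$ is equivalent to $U'$ being $A$-invariant.

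Next I would invoke that $A$ is self-adjoint, so $U'$ is $A$-invariant if and only if its orthogonal complement in $V$, namely $U$, is $A$-invariant. It then remains to establish the clean equivalence: $U$ is principal $\Leftrightarrow$ $U$ is $A$-invariant. The implication ($\Rightarrow$) is immediate, since a principal $U$ is spanned by some principal vectors $e_i$, each an eigenvector of $A$. For ($\Leftarrow$), given an $A$-invariant $U$, the restriction $A|_U$ is self-adjoint on $U$, so by the spectral theorem $U$ has an orthonormal eigenbasis of $A$; applying the same to the (also invariant) $U'$ and taking the union yields an orthonormal eigenbasis of $A$ on $V$ which, ordered by increasing eigenvalue and completed with suitable companion vectors in $W$, is a principal basis of $V$ w.r.t.\ $W$ having $U$ as a coordinate subspace. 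Thus $U$ is principal, and chaining the equivalences closes the proof for both directions simultaneously, with the degenerate cases $U=\{0\}$ and $U=V$ handled automatically.

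I expect the main obstacle to be this last equivalence, specifically checking that an orthonormal eigenbasis of $A$ genuinely yields a principal basis in the sense of \eqref{eq:inner ei fj}: one must produce companion vectors $f_i\in W$ with $\inner{e_i,f_j}=\delta_{ij}\cos\theta_i$, taking $f_i=Pe_i/\cos\theta_i$ when $\cos\theta_i\neq 0$ and completing the orthonormal family in $W$ otherwise. The verification $\inner{e_i,f_j}=\inner{Pe_i,Pe_j}/\cos\theta_j=\cos^2\theta_j\,\delta_{ij}/\cos\theta_j=\delta_{ij}\cos\theta_i$ settles this, but it is the one place where care with the self-adjointness and idempotency of $\Proj_W$ is needed.

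As an alternative, the forward implication of the proposition can be obtained without introducing $A$, directly from \cref{pr:principal subspaces}\emph{\ref{it:complement principal}} and \eqref{eq:Pei}: if $U$ is principal then $U$ and $U^\perp\cap V$ are spanned by complementary subsets of a principal basis $(e_i)$, so by \eqref{eq:Pei} their images $P(U)$ and $P(U^\perp\cap V)$ are spanned by disjoint subfamilies of the orthonormal principal vectors $f_i$ of $W$, and are therefore orthogonal.
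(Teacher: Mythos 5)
Your proof is correct, but it takes a genuinely different route from the paper's. You reformulate both sides of the equivalence as statements about the self-adjoint operator $A=(\Proj^V_W)^*\Proj^V_W$ on $V$: the identity $\inner{Pu,Pu'}=\inner{u,Au'}$ turns $P(U)\perp P(U^\perp\cap V)$ into $A$-invariance of $U^\perp\cap V$ (hence of $U$), and the spectral theorem turns $A$-invariance back into the existence of a principal basis adapted to $U$. The paper instead argues directly with bases: for ($\Leftarrow$) it uses the same self-adjointness of $P$ only to transfer orthogonality ($P(U)\perp P(U^\perp\cap V)$ implies $U\perp P(U^\perp\cap V)$ and $P(U)\perp U^\perp\cap V$), then concatenates associated principal bases of the pairs $(U,P(U))$ and $(U^\perp\cap V,\,P(U^\perp\cap V))$ and checks \eqref{eq:inner ei fj} for the union, after disposing of the degenerate zero subspaces via \cref{pr:orthogonal subspace principal} and \cref{pr:principal subspaces}. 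Your operator-theoretic version is arguably more conceptual -- it isolates the clean statement ``$U$ principal $\Leftrightarrow$ $U$ is $A$-invariant,'' which could be reused elsewhere -- at the cost of re-deriving the SVD-type pairing (the companion vectors $f_i=Pe_i/\cos\theta_i$), which is exactly the point you correctly flag as needing care; the paper's version avoids that by quoting principal bases of the sub-pairs as already given. One cosmetic slip: ordering the eigenbasis by \emph{increasing} eigenvalue $\cos^2\theta_i$ gives the principal angles in decreasing order, opposite to the convention $\theta_1\leq\ldots\leq\theta_m$; this does not affect which subspaces are coordinate subspaces, so the argument stands.
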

\begin{proof}
\emph{($\Rightarrow$)} Follows from \eqref{eq:Pei}.  
\emph{($\Leftarrow$)} By lemmas \ref{pr:orthogonal subspace principal} and \ref{pr:principal subspaces}\emph{\ref{it:complement principal}}, we can assume $U$, $U^\perp \cap V$, $P(U)$ and $P(U^\perp \cap V)$ are not $\{0\}$. As $P(U)\perp P(U^\perp \cap V)$ implies $U\perp P(U^\perp \cap V)$ and $ P(U)\perp U^\perp \cap V$, from associated principal bases of $U$ and $P(U)$, and of $U^\perp \cap V$ and $P(U^\perp \cap V)$, we form principal bases for $V$ and $P(V)$.
\SELF{by \eqref{eq:inner ei fj}}
\end{proof}

\begin{proposition}\label{pr:principal blades proj orth}
Let $V,W\subset X$ be nonzero subspaces, $V_1,V_2\subset V$ be distinct $r$-dimensional  coprincipal subspaces w.r.t. $W$, and $P=\Proj_W$. Then: 
\begin{enumerate}[i)]
\item $V_1\pperp V_2$.
\item If $V_1\not\pperp W$ then $P(V_1)\pperp P(V_2)$.
\item $\inner{\nu_1,\nu_2}=0$ and $\inner{P\nu_1,P\nu_2}=0$ for any $\nu_1\in\Lambda^r V_1$, $\nu_2\in\Lambda^r V_2$. \SELF{Usa p/\cref{pr:identity Theta principal subspaces}.}
\end{enumerate}
\end{proposition}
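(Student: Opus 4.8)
The plan is to fix a single principal basis shared by $V_1$ and $V_2$ and reduce everything to orthonormality relations. Let $\beta_V=(e_1,\ldots,e_p)$ and $\beta_W=(f_1,\ldots,f_q)$ be associated principal bases, with principal angles $\theta_1,\ldots,\theta_m$ as in \eqref{eq:inner ei fj}, for which both $V_1$ and $V_2$ are coordinate subspaces (this is exactly what coprincipal means). Then $V_1=\Span(e_i:i\in I)$ and $V_2=\Span(e_j:j\in J)$ for multi-indices $I,J\in\I_r^p$, and since $V_1\neq V_2$ have the same dimension $r\geq 1$, there is an index $k$ appearing in $I$ but not in $J$. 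This single index drives all three parts.

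For \emph{(i)}, the vector $e_k\in V_1$ is nonzero and, by orthonormality of $\beta_V$, orthogonal to every $e_j$ with $j\in J$, hence to all of $V_2$; this is precisely $V_1\pperp V_2$. For \emph{(ii)}, assume $V_1\not\pperp W$, so $\dim P(V_1)=r$ by \cref{pr:partial orthogonality}\emph{\ref{it:orthogonality}}. By \eqref{eq:Pei} each $Pe_i$ equals $f_i\cos\theta_i$ (or $0$ when $i>m$), and the only way the $r$ vectors $\{Pe_i\}_{i\in I}$ can span an $r$-dimensional space is if every $i\in I$ has $i\leq m$ and $\cos\theta_i\neq 0$; in particular $f_k=Pe_k/\cos\theta_k\in P(V_1)$. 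Since $P(V_2)$ is spanned by the $Pe_j$, $j\in J$, each a multiple of $f_j$ (or $0$), and $k\notin J$, orthonormality of $\beta_W$ gives $f_k\perp P(V_2)$. Thus the nonzero $f_k\in P(V_1)$ witnesses $P(V_1)\pperp P(V_2)$.

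For \emph{(iii)}, the first identity is immediate: by \emph{(i)} we have $V_1\pperp V_2$, so \cref{pr:partial orth Lambda orth} yields $\Lambda^r V_1\perp\Lambda^r V_2$, whence $\inner{\nu_1,\nu_2}=0$. For the second identity I would split into cases. If $V_1\pperp W$ or $V_2\pperp W$, then by \cref{pr:partial orthogonality}\emph{\ref{it:Pnu represents PV}} the corresponding projected blade ($P\nu_1$ or $P\nu_2$) vanishes, so $\inner{P\nu_1,P\nu_2}=0$ trivially. Otherwise both $V_1,V_2\not\pperp W$, so $\dim P(V_1)=\dim P(V_2)=r$ and $P\nu_1\in\Lambda^r P(V_1)$, $P\nu_2\in\Lambda^r P(V_2)$; by \emph{(ii)} we have $P(V_1)\pperp P(V_2)$, and \cref{pr:partial orth Lambda orth} again gives $\Lambda^r P(V_1)\perp\Lambda^r P(V_2)$, so $\inner{P\nu_1,P\nu_2}=0$.

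The only real obstacle is the bookkeeping in the degenerate situations, rather than any computation. The delicate point is arguing, in \emph{(ii)}, that $V_1\not\pperp W$ genuinely forces $\cos\theta_i\neq 0$ for all $i\in I$, so that $f_k$ truly lies in $P(V_1)$; this rests on the rank count for $\{Pe_i\}_{i\in I}$ via \eqref{eq:Pei}. The remaining care is to allow $\nu_1$ or $\nu_2$ to be zero and to handle the collapse of $P(V_2)$ when $V_2\pperp W$, both of which are absorbed by the case split in \emph{(iii)}. No estimate beyond the orthonormality relations \eqref{eq:inner ei fj} is required.
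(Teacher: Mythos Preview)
Your proof is correct and follows essentially the same approach as the paper: both pick a principal basis vector $e_k$ in $V_1\setminus V_2$ to witness partial orthogonality, then use its projection for \emph{(ii)}, and invoke \cref{pr:partial orth Lambda orth} together with \cref{pr:partial orthogonality}\emph{\ref{it:Pnu represents PV}} for \emph{(iii)}. The only difference is that for \emph{(ii)} the paper cites \cref{pr:principal orth proj} to get $P(\Span(e_k))\perp P(V_2)$, whereas you inline that argument directly from \eqref{eq:Pei}; your slightly more explicit case split in \emph{(iii)} (treating $V_2\pperp W$ separately) is a harmless refinement.
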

\begin{proof}
\emph{(i)} $V_1$ has an element $e$ of the principal basis which $V_2$ does not. 
\emph{(ii)} If $V_1\not\pperp W$ then $Pe\neq 0$, and by \cref{pr:principal orth proj} $P(\Span(e))\perp P(V_2)$.
\emph{(iii)} By \cref{pr:partial orth Lambda orth}, $\Lambda^r V_1 \perp \Lambda^r V_2$ and, if $V_1\not\pperp W$, $\Lambda^r P(V_1) \perp \Lambda^r P(V_2)$. If $V_1 \pperp W$ then $P\nu_1=0$, by \cref{pr:partial orthogonality}\emph{\ref{it:Pnu represents PV}}. 
\end{proof}

\begin{theorem}\label{pr:identity Theta principal subspaces}
Given nonzero
\SELF{Não precisa, é só pra facilitar} 
subspaces $V,W\subset X$ and $U\subset V$, let $r=\dim U$ and $p=\dim V$. Then
\[ \cos^2\Theta_{U,W} = \sum_{I\in\I^p_r} \cos^2\Theta_{U,V_I} \cdot \cos^2\Theta_{V_I,W}, \]
where the $V_I$'s are coordinate $r$-subspaces of a principal basis $\beta$ of $V$ w.r.t. $W$.
\end{theorem}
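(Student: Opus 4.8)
The plan is to reduce the whole statement to a single Pythagorean-type orthogonal decomposition of one projected blade. Fix a unit blade $\mu\in\Lambda^r U\subset\Lambda^r V$ representing $U$, and set $P=\Proj_W$. Since the principal basis $\beta=(e_1,\ldots,e_p)$ of $V$ w.r.t.\ $W$ is orthonormal, its coordinate $r$-blades $\nu_I=e_{i_1}\wedge\cdots\wedge e_{i_r}$, for $I\in\I_r^p$, form an orthonormal basis of $\Lambda^r V$, and each $\nu_I$ is a unit blade representing $V_I$. I would then expand $\mu=\sum_{I}\inner{\nu_I,\mu}\,\nu_I$ and apply the (linear) induced projection on $\Lambda^r X$, obtaining $P\mu=\sum_I \inner{\nu_I,\mu}\,P\nu_I$.

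Taking squared norms gives
\[ \|P\mu\|^2=\sum_{I,J}\overline{\inner{\nu_I,\mu}}\,\inner{\nu_J,\mu}\,\inner{P\nu_I,P\nu_J}. \]
The three Grassmann angles in the statement are read off directly from the blade-product dictionary: as $\mu$ is a unit blade, $\cos^2\Theta_{U,W}=\|P\mu\|^2$ and $\cos^2\Theta_{V_I,W}=\|P\nu_I\|^2$ straight from the definition of the angle, while $\cos^2\Theta_{U,V_I}=|\inner{\nu_I,\mu}|^2$ by \cref{pr:products}\emph{\ref{it:Theta inner blades}} (both $U$ and $V_I$ being $r$-dimensional, with unit blades $\mu$ and $\nu_I$). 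Thus the claimed identity is exactly the assertion that the diagonal terms of the double sum already reproduce $\|P\mu\|^2$, i.e.\ that every off-diagonal term vanishes.

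So the entire proof rests on showing $\inner{P\nu_I,P\nu_J}=0$ whenever $I\neq J$, and this is precisely where the principal-basis hypothesis is essential. For $I\neq J$ the subspaces $V_I$ and $V_J$ are distinct $r$-dimensional coprincipal subspaces of $V$ w.r.t.\ $W$, being coordinate subspaces of the same $\beta$, so \cref{pr:principal blades proj orth}\emph{(iii)} gives $\inner{P\nu_I,P\nu_J}=0$. Substituting this into the double sum collapses it to $\|P\mu\|^2=\sum_I|\inner{\nu_I,\mu}|^2\,\|P\nu_I\|^2$, which is the desired formula after the identifications above.

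The main obstacle is concentrated entirely in that orthogonality of projected coordinate blades: without it the cross terms survive and there is no clean splitting. Everything else is a routine expansion in an orthonormal basis together with \cref{pr:products}. I would stress that the preparatory results (\cref{pr:principal orth proj} and \cref{pr:principal blades proj orth}) were built precisely to deliver this step, so once they are invoked the computation is immediate; I would also note that the argument is insensitive to the real/complex distinction, since the conjugations are carried along automatically throughout.
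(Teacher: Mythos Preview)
Your proof is correct and follows essentially the same route as the paper: expand a unit blade $\mu\in\Lambda^r U$ in the orthonormal basis $\{\nu_I\}$ of $\Lambda^r V$, project, invoke \cref{pr:principal blades proj orth} to kill the cross terms $\inner{P\nu_I,P\nu_J}$, and then identify the three squared cosines via \eqref{eq:norm projection blade} and \cref{pr:products}\emph{\ref{it:Theta inner blades}}. The paper's version is simply a compressed form of what you wrote.
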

\begin{proof}
The coordinate $r$-blades $\nu_I\in\Lambda^r V_I$ of $\beta$ form an orthonormal basis of $\Lambda^r V$, so $P\mu = \sum_I \inner{\nu_I,\mu} P\nu_I$ for an unit $\mu\in\Lambda^r U$ and $P=\Proj_W$. 
By \cref{pr:principal blades proj orth} the $P\nu_I$'s are mutually orthogonal, so $\|P\mu\|^2 = \sum_I |\inner{\mu,\nu_I}|^2 \|P\nu_I\|^2$. 
The result follows from \cref{pr:products}\emph{\ref{it:Theta inner blades}} and \eqref{eq:norm projection blade}. 
\end{proof}

By \cref{pr:Grassmann coordinate} $\sum_{I\in\I^p_r} \cos^2\Theta_{U,V_I} =1$, so this result means $\cos^2\Theta_{U,W}$ is a weighted average of the $\cos^2\Theta_{V_I,W}$'s, with weights given by the $\cos^2\Theta_{U,V_I}$'s.

\begin{figure}
	\centering
	\includegraphics[width=0.7\linewidth]{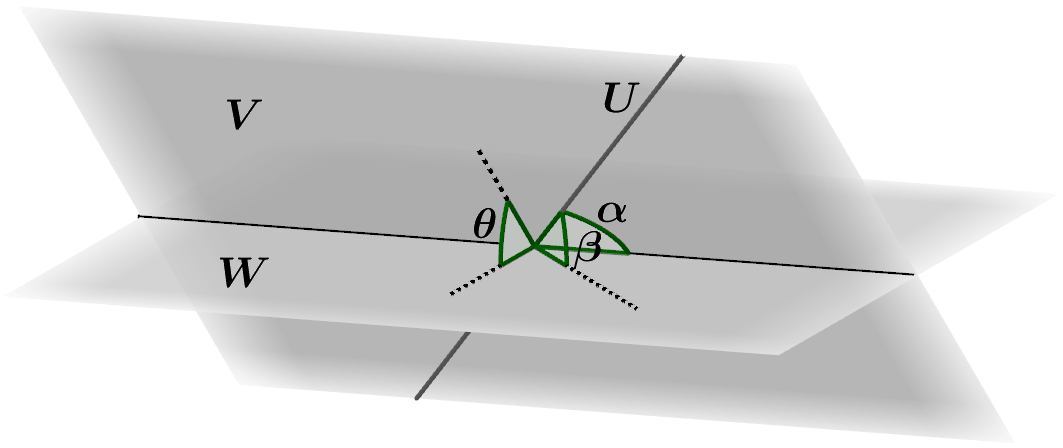}
	\caption{$\cos^2\beta = \cos^2\alpha +\sin^2\alpha\cos^2\theta$}
	\label{fig:angles principal lines}
\end{figure}

\begin{example}
	Given planes $V,W\subset\R^3$ and a line $U\subset V$, let $\alpha=\Theta_{U,V\cap W}$, $\beta=\Theta_{U,W}$ and $\theta=\Theta_{V,W}$ (\cref{fig:angles principal lines}). 
	As $V\cap W$ and $(V\cap W)^\perp\cap V$ are principal lines of $V$ w.r.t. $W$, we have  $\cos^2\beta = \cos^2\alpha\cdot 1 +\sin^2\alpha\cdot\cos^2\theta$. 
\end{example}

\begin{definition}
A partition $V=\bigoplus_i V_i$ is \emph{principal} w.r.t. $W$ if the $V_i$'s are coprincipal subspaces of $V$ w.r.t. $W$.
\end{definition}

Any principal partition is an orthogonal partition. Note that some subspaces of a partition can be $\{0\}$.

\begin{proposition}\label{pr:principal partition}
Let $V,W\subset X$ be nonzero subspaces, $P=\Proj_W$, and $V=\bigoplus_i V_i$ be  an orthogonal partition. The following are equivalent:\SELF{Usa p/ \cref{pr:converse Theta partition}}
\begin{enumerate}[i)]
\item $V=\bigoplus_i V_i$ is a principal partition w.r.t. $W$.\label{it:sum Vi principal}
\item $P(V)=\bigoplus_i P(V_i)$ is a principal partition w.r.t. $V$.\label{it:sum PVi principal}
\item $P(V)=\bigoplus_i P(V_i)$ is an orthogonal partition.\label{it:sum PVi orthogonal}
\end{enumerate}
\end{proposition}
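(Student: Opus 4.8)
The plan is to prove the equivalences through the cycle $(i)\Rightarrow(ii)\Rightarrow(iii)\Rightarrow(i)$. One link is immediate: $(ii)\Rightarrow(iii)$ holds because, as noted just before the statement, every principal partition is an orthogonal partition, so no work is needed there. The real content lies in $(i)\Rightarrow(ii)$ and, above all, in the converse direction $(iii)\Rightarrow(i)$.

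For $(i)\Rightarrow(ii)$ I would argue concretely with an adapted basis. By hypothesis there is a principal basis $\beta=(e_1,\dots,e_p)$ of $V$ w.r.t. $W$, with associated principal vectors $f_k\in W$ and angles $\theta_k$, such that each $V_i$ is spanned by a subset of the $e_k$. Since $P=\Proj_W$ gives $Pe_k=f_k\cos\theta_k$ by \eqref{eq:Pei}, the subspace $P(V_i)$ is spanned by those $f_k$ with $e_k\in V_i$ and $\cos\theta_k\neq0$. The key point is that, by \eqref{eq:inner ei fj}, these $f_k$ (as an orthonormal family in $P(V)$) together with the $e_k$ (in $V$) satisfy the defining relation $\inner{f_k,e_l}=\delta_{kl}\cos\theta_k$ of principal vectors for the pair $P(V),V$; hence the relevant $f_k$ constitute a principal basis of $P(V)$ w.r.t. $V$, and every $P(V_i)$ is a coordinate subspace of it. As distinct $P(V_i)$ are then spanned by disjoint, mutually orthogonal subsets of this basis, $P(V)=\bigoplus_i P(V_i)$ is a principal partition w.r.t. $V$, which is $(ii)$.

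The substance of the proof is $(iii)\Rightarrow(i)$, and the main obstacle is precisely \emph{coprincipality}: using \cref{pr:principal orth proj} one easily sees that each $V_i$ is individually principal (for $i$ fixed, $V_i^\perp\cap V=\bigoplus_{j\neq i}V_j$, so $P(V_i^\perp\cap V)=\sum_{j\neq i}P(V_j)$ is orthogonal to $P(V_i)$ by $(iii)$), but upgrading ``each $V_i$ principal for its own basis'' to ``all $V_i$ principal for one common basis'' is the crux. The cleanest route is to pass to the self-adjoint operator $Q=\Proj_V\Proj_W|_V$ on $V$, which satisfies $\inner{Qv,v'}=\inner{Pv,Pv'}$ and whose orthonormal eigenbases are exactly the principal bases of $V$ w.r.t. $W$ (the principal vectors being the orthonormal eigenvectors of $P^*P$, as recalled in \cref{sc:preliminaries}). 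Condition $(iii)$ says $P(V_i)\perp P(V_j)$ for $i\neq j$, i.e. $\inner{Pv,Pv'}=0$ whenever $v\in V_i$ and $v'\in V_j$; equivalently $\inner{Qv,v'}=0$ for such $v,v'$. Because $V=\bigoplus_i V_i$ is an orthogonal partition, this says exactly that $Q(V_j)\perp V_i$ for all $i\neq j$, that is, each $V_i$ is $Q$-invariant.

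It then remains to reassemble a single adapted basis. Since $Q$ is self-adjoint and each $V_i$ is $Q$-invariant, I would diagonalize $Q|_{V_i}$ inside each $V_i$, obtaining an orthonormal eigenbasis of $Q$ within $V_i$; the union over $i$ is an orthonormal eigenbasis of $Q$ on all of $V$, hence a single principal basis of $V$ w.r.t. $W$ to which the partition is adapted. This makes the $V_i$ coprincipal, giving $(i)$ and closing the cycle. (Alternatively $(iii)\Rightarrow(i)$ can be done by induction on the number of parts, splitting off $V_1$ via \cref{pr:principal orth proj}, using \cref{pr:principal subspaces}\emph{(i)} to keep $V_1^\perp\cap V$ principal for the same basis, and splicing bases as in the proof of \cref{pr:principal orth proj}; but the operator argument makes the coprincipality transparent.) The reassembly is the step to treat with care, as it is exactly where the per-piece conclusion is promoted to a global one.
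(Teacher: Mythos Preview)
Your argument is correct and the cycle closes. The step $(i)\Rightarrow(ii)$ is essentially what the paper does, only spelled out by hand rather than via \cref{pr:principal orth proj} and \cref{pr:principal subspaces}\emph{\ref{it:P(U) principal}}.

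Where you genuinely diverge is in $(iii)\Rightarrow(i)$. The paper splices bases: from $P(V_i)\perp P(V_j)$ it deduces $V_i\perp P(V_j)$, then takes a principal basis of each $V_i$ w.r.t.\ $W$ and checks via \eqref{eq:inner ei fj} that the union is a principal basis of $V$ (the associated $f$-vectors sitting in the mutually orthogonal $P(V_i)$'s, with a completion in $P(V)^\perp\cap W$). Your route recasts $(iii)$ as $Q$-invariance of each $V_i$ for the self-adjoint operator $Q=P^*P$ on $V$, then diagonalizes blockwise. This is a cleaner and more conceptual way to secure \emph{coprincipality}: the global principal basis comes for free from spectral theory, whereas the paper's splicing leaves to the reader the verification that the glued $e$'s and $f$'s really satisfy \eqref{eq:inner ei fj} across different blocks. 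On the other hand, the paper's argument stays entirely within the language already set up and avoids introducing $Q$ explicitly. Your parenthetical alternative (induction via \cref{pr:principal orth proj} and \cref{pr:principal subspaces}\emph{\ref{it:complement principal}}) is in fact close to what the paper does in one stroke.
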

\begin{proof}
\emph{(i\,$\Rightarrow$\,ii)} The $P(V_i)$'s are pairwise disjoint by \cref{pr:principal orth proj}, and coprincipal by \cref{pr:principal subspaces}\emph{\ref{it:P(U) principal}}. 
\emph{(ii\,$\Rightarrow$\,iii)} Immediate.
\emph{(iii\,$\Rightarrow$\,i)} As the $P(V_i)$'s are mutually orthogonal, $V_i \perp P(V_j)$ if $i\neq j$. As the $V_i$'s are also mutually orthogonal, by \eqref{eq:inner ei fj} the union of principal bases of theirs w.r.t. $W$ gives a principal basis of $V$.
\end{proof}

In \cite{Mandolesi_Grassmann} we got $\cos \Theta_{V,W} = \prod_i \cos\Theta_{V_i,W}$ for a principal partition $V=\bigoplus_i V_i$.
We now generalize this for orthogonal partitions and get a partial converse.


\begin{theorem}\label{pr:Theta direct sum}
Let $V_1,V_2,W\subset X$ be subspaces and $P=\Proj_W$. If $V_1\perp V_2$,
\begin{equation*}
\cos \Theta_{V_1\oplus V_2,W} = \cos \Theta_{V_1,W}\cdot \cos \Theta_{V_2,W}\cdot \cos \Theta^\perp_{P(V_1),P(V_2)}.
\end{equation*}
\end{theorem}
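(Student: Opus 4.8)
The plan is to represent $V_1\oplus V_2$ by the exterior product of blades for $V_1$ and $V_2$ and to track how its norm factors through the projection $P=\Proj_W$. First I would pick nonzero blades $\nu_1,\nu_2$ representing $V_1,V_2$. Since $V_1\perp V_2$, \cref{pr:complementary simple cases}\emph{\ref{it:Theta perp 0}} gives $\Theta^\perp_{V_1,V_2}=0$, so \cref{pr:products}\emph{\ref{it:exterior product}} yields $\|\nu_1\wedge\nu_2\|=\|\nu_1\|\,\|\nu_2\|$; in particular $\nu:=\nu_1\wedge\nu_2$ is nonzero and represents $V_1\oplus V_2$.

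Next I would use that projection distributes over the exterior product: writing each $\nu_i$ as a product of vectors, the formula $P\nu = Pv_1\wedge\cdots\wedge Pv_p$ gives $P\nu = P\nu_1\wedge P\nu_2$. Assuming first that neither $V_i$ is partially orthogonal to $W$, each $P\nu_i$ is a nonzero blade representing $P(V_i)$ by \cref{pr:partial orthogonality}\emph{\ref{it:Pnu represents PV}}, so applying \cref{pr:products}\emph{\ref{it:exterior product}} to $P\nu_1\wedge P\nu_2$ gives $\|P\nu\| = \|P\nu_1\|\,\|P\nu_2\|\,\cos\Theta^\perp_{P(V_1),P(V_2)}$. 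The conclusion then follows by dividing: from \eqref{eq:norm projection blade},
\[ \cos\Theta_{V_1\oplus V_2,W} = \frac{\|P\nu\|}{\|\nu\|} = \frac{\|P\nu_1\|}{\|\nu_1\|}\cdot\frac{\|P\nu_2\|}{\|\nu_2\|}\cdot\cos\Theta^\perp_{P(V_1),P(V_2)}, \]
and recognizing the first two ratios as $\cos\Theta_{V_1,W}$ and $\cos\Theta_{V_2,W}$ yields the identity.

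The one point needing care, and the main subtlety rather than any real obstacle, is the degenerate case where some $V_i\pperp W$. Then $P\nu_i=0$ by \cref{pr:partial orthogonality}\emph{\ref{it:Pnu represents PV}}, so $P\nu=P\nu_1\wedge P\nu_2=0$ and the left-hand side vanishes; on the right-hand side the corresponding factor $\cos\Theta_{V_i,W}=\|P\nu_i\|/\|\nu_i\|=0$, and since the remaining factors lie in $[0,1]$ the whole product vanishes as well. Thus both sides agree in this case too, and the clean multiplicative argument degenerates consistently without having to interpret $\Theta^\perp_{P(V_1),P(V_2)}$ on a vanishing blade.
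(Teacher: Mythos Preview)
Your proof is correct and follows essentially the same route as the paper: represent $V_1\oplus V_2$ by $\nu_1\wedge\nu_2$, use $P(\nu_1\wedge\nu_2)=P\nu_1\wedge P\nu_2$, and apply \cref{pr:products}\emph{\ref{it:exterior product}} to the projected blades, with the partially orthogonal case handled separately. The only cosmetic differences are that the paper takes $\nu_1,\nu_2$ to be unit blades (avoiding the final division) and dispatches the degenerate case up front via \cref{pr:partial orthogonality}\emph{\ref{pr:subspace not pperp}} and \cref{pr:properties Grassmann}\emph{\ref{it:Theta pi2}} rather than by the direct $P\nu_i=0$ computation.
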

\begin{proof}
By propositions \ref{pr:partial orthogonality}\emph{\ref{pr:subspace not pperp}} and \ref{pr:properties Grassmann}\emph{\ref{it:Theta pi2}}, we can assume $V_1\not\pperp W$ and $V_2\not\pperp W$.
As $V_1\perp V_2$, if $\nu_1$ and $\nu_2$ are unit blades representing them, $\nu_1\wedge\nu_2$ is an unit blade representing $V_1\oplus V_2$. 
By \eqref{eq:norm projection blade}, $\cos \Theta_{V_1\oplus V_2,W} = \|P(\nu_1\wedge\nu_2)\| = \|(P\nu_1)\wedge(P\nu_2)\|$, and the result follows from propositions \ref{pr:partial orthogonality}\emph{\ref{it:Pnu represents PV}} and \ref{pr:products}\emph{\ref{it:exterior product}}.
\end{proof}

\begin{corollary}\label{pr:Theta orthog partition}
Let $V,W\subset X$ be subspaces and $P=\Proj_W$. For an orthogonal partition $V=\bigoplus_{i=1}^k V_i$,
\begin{equation*}
\cos \Theta_{V,W} = \prod_{i=1}^k \cos \Theta_{V_i,W} \cdot \prod_{i=1}^{k-1}\cos \Theta^\perp_{P(V_i),P(V_{i+1}\oplus\ldots\oplus V_k)}.
\end{equation*}
\end{corollary}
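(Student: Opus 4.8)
The plan is to prove \cref{pr:Theta orthog partition} by induction on $k$, the number of subspaces in the orthogonal partition, using \cref{pr:Theta direct sum} as the base engine. The key observation is that an orthogonal partition $V = \bigoplus_{i=1}^k V_i$ can be regrouped as $V = V_1 \oplus (V_2 \oplus \cdots \oplus V_k)$, where the two pieces are orthogonal, so \cref{pr:Theta direct sum} applies directly to this coarse splitting.

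First I would verify the base case $k=1$, where the product of the $\Theta^\perp$ factors is empty (equals $1$) and the statement reduces to the trivial identity $\cos\Theta_{V_1,W} = \cos\Theta_{V_1,W}$. Then, for the inductive step, I would set $V' = V_2 \oplus \cdots \oplus V_k$ and apply \cref{pr:Theta direct sum} to the orthogonal decomposition $V = V_1 \oplus V'$, obtaining
\begin{equation*}
\cos\Theta_{V,W} = \cos\Theta_{V_1,W}\cdot\cos\Theta_{V',W}\cdot\cos\Theta^\perp_{P(V_1),P(V')}.
\end{equation*}
The factor $\cos\Theta^\perp_{P(V_1),P(V')}$ is exactly the $i=1$ term of the second product in the corollary, since $P(V') = P(V_2\oplus\cdots\oplus V_k)$. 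I would then invoke the induction hypothesis on the orthogonal partition $V' = \bigoplus_{i=2}^k V_i$ to expand $\cos\Theta_{V',W}$ as $\prod_{i=2}^k \cos\Theta_{V_i,W}\cdot\prod_{i=2}^{k-1}\cos\Theta^\perp_{P(V_i),P(V_{i+1}\oplus\cdots\oplus V_k)}$, and substituting this back would assemble all the required factors in the stated order.

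The main obstacle I anticipate is bookkeeping rather than conceptual: I must confirm that the $\Theta^\perp$ factors produced by the recursion line up precisely with the indices in the claimed formula. In particular, the base splitting contributes the $i=1$ term $\cos\Theta^\perp_{P(V_1),P(V_2\oplus\cdots\oplus V_k)}$, while the induction hypothesis supplies the terms for $i=2,\ldots,k-1$; together these give exactly $\prod_{i=1}^{k-1}$, as desired. One subtlety worth checking is that the induction hypothesis requires $V'$ to be a genuine orthogonal partition into $k-1$ pieces, which follows immediately since the $V_i$'s are pairwise orthogonal. I would also note, as in \cref{pr:Theta direct sum}, that degenerate cases where some $V_i \pperp W$ are harmless: by \cref{pr:partial orthogonality}\emph{\ref{pr:subspace not pperp}} and \cref{pr:properties Grassmann}\emph{\ref{it:Theta pi2}}, both sides become consistent, so no separate argument is needed beyond what \cref{pr:Theta direct sum} already handles.
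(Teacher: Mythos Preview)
Your proposal is correct and follows the natural route: the paper states this result as an immediate corollary of \cref{pr:Theta direct sum} with no further proof, and the intended derivation is precisely the inductive iteration of that theorem along the splitting $V=V_1\oplus(V_2\oplus\cdots\oplus V_k)$ that you spell out.
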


\begin{proposition}\label{pr:converse Theta partition}
For nonzero subspaces $V,W\subset X$ with $V\not\pperp W$, a partition  $V=\bigoplus_i V_i$ is principal w.r.t. $W$ if, and only if, it is orthogonal and $\cos \Theta_{V,W} = \prod_i \cos \Theta_{V_i,W}$.
\end{proposition}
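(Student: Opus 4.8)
The plan is to derive everything from \cref{pr:Theta orthog partition}, which, for any orthogonal partition $V=\bigoplus_{i=1}^k V_i$, writes
\[ \cos\Theta_{V,W} = \prod_{i=1}^k \cos\Theta_{V_i,W}\cdot \prod_{i=1}^{k-1}\cos\Theta^\perp_{P(V_i),P(V_{i+1}\oplus\cdots\oplus V_k)}, \]
with $P=\Proj_W$. The proposition then amounts to showing that this second (``correction'') product equals $1$ precisely when the partition is principal, so I would treat both implications in parallel around this factorization.

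For the forward implication, a principal partition is orthogonal by definition, so \cref{pr:Theta orthog partition} applies. By \cref{pr:principal partition} the projected partition $P(V)=\bigoplus_i P(V_i)$ is then orthogonal, hence each $P(V_i)$ is orthogonal to $P(V_{i+1}\oplus\cdots\oplus V_k)=\sum_{j>i}P(V_j)$. \cref{pr:complementary simple cases}\emph{\ref{it:Theta perp 0}} makes every complementary-angle factor equal to $1$, and the factorization collapses to $\cos\Theta_{V,W}=\prod_i\cos\Theta_{V_i,W}$, recovering the result of \cite{Mandolesi_Grassmann}.

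For the converse — the more substantive direction — I would assume the partition is orthogonal with $\cos\Theta_{V,W}=\prod_i\cos\Theta_{V_i,W}$ and first secure a nonvanishing denominator: since $V\not\pperp W$, \cref{pr:partial orthogonality}\emph{\ref{pr:subspace not pperp}} gives $V_i\not\pperp W$, so $\cos\Theta_{V_i,W}\neq 0$ by \cref{pr:properties Grassmann}\emph{\ref{it:Theta pi2}} and the product $\prod_i\cos\Theta_{V_i,W}$ is nonzero. Cancelling it against \cref{pr:Theta orthog partition} forces $\prod_{i=1}^{k-1}\cos\Theta^\perp_{P(V_i),P(V_{i+1}\oplus\cdots\oplus V_k)}=1$. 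As each complementary Grassmann angle lies in $[0,\frac\pi2]$, each factor is in $[0,1]$, so a product equal to $1$ forces every factor to be $1$; \cref{pr:complementary simple cases}\emph{\ref{it:Theta perp 0}} then yields $P(V_i)\perp P(V_{i+1}\oplus\cdots\oplus V_k)\supseteq P(V_j)$ for every $j>i$. Letting $i$ range shows the $P(V_i)$'s are mutually orthogonal, so $P(V)=\bigoplus_i P(V_i)$ is an orthogonal partition, and \cref{pr:principal partition} \emph{(iii\,$\Rightarrow$\,i)} identifies $V=\bigoplus_i V_i$ as principal.

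The crux of the converse is precisely the step from the product identity back to the geometric orthogonality of the projections: it depends essentially on the nonvanishing $\cos\Theta_{V_i,W}\neq 0$ (where the hypothesis $V\not\pperp W$ is indispensable, since it is what keeps each factor from being $\frac\pi2$) together with the boundedness of each cosine factor by $1$. Zero summands $V_i=\{0\}$ in the partition need only a routine check: they are always principal, project to $\{0\}$, and contribute trivial factors ($\cos\Theta_{\{0\},W}=1$ and unit complementary-angle cosines), so they affect neither the cancellation nor the orthogonality argument.
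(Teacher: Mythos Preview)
Your proof is correct and follows exactly the route the paper takes: apply \cref{pr:Theta orthog partition}, use the hypothesis $V\not\pperp W$ together with \cref{pr:complementary simple cases}\emph{\ref{it:Theta perp 0}} to force each correction factor to be $1$, and then invoke \cref{pr:principal partition} to conclude. The paper compresses all of this into a single sentence citing precisely those three results, while you have spelled out the cancellation and the use of \cref{pr:partial orthogonality}\emph{\ref{pr:subspace not pperp}} and \cref{pr:properties Grassmann}\emph{\ref{it:Theta pi2}} that make it legitimate.
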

\begin{proof}
\Cref{pr:Theta orthog partition}  and propositions \ref{pr:complementary simple cases}\emph{\ref{it:Theta perp 0}} and \ref{pr:principal partition} give the converse.
\end{proof}


\providecommand{\bysame}{\leavevmode\hbox to3em{\hrulefill}\thinspace}
\providecommand{\MR}{\relax\ifhmode\unskip\space\fi MR }
\providecommand{\MRhref}[2]{%
	\href{http://www.ams.org/mathscinet-getitem?mr=#1}{#2}
}
\providecommand{\href}[2]{#2}

\end{document}